\documentclass[11pt,reqno, a4paper]{amsart}
\usepackage{amsfonts}
\usepackage{amssymb}
\usepackage{txfonts}
\usepackage{mathrsfs}
\usepackage{bbm}
\usepackage{enumerate}

\textheight 22true cm \textwidth=13.8cm \oddsidemargin 1.18cm
\evensidemargin 1.18cm
\newtheorem{thm}{Theorem}[section]
 
 \newtheorem{lem}[thm]{Lemma}
 \newtheorem{prop}[thm]{Proposition}
 
 \newtheorem{rmk}[thm]{Remark}
 \numberwithin{equation}{section}

 \newcommand{\vpi}{\varphi}
 \newcommand{\vstar}{v_{*}}

 \newcommand{\p}{\partial}
 
 \newcommand{\s}{\mathcal{S}}
 \newcommand{\Real}{\mathbb{R}}
 
 \newcommand{\Natural}{\mathbb{N}}
 
 \newcommand{\norm}[1]{\Vert#1\Vert}
 
 \newcommand{\abs}[1]{\left\vert#1\right\vert}
 \newcommand{\bigabs}[1]{\big\vert#1\big\vert}
 \newcommand{\set}[1]{\left\{#1\right\}}
 \newcommand{\bigset}[1]{\big\{#1\big\}}
 \newcommand{\inner}[1]{\left(#1\right)}
 \newcommand{\biginner}[1]{\big(#1\big)}
 
 \newcommand{\com}[1]{\big[#1\big]}
 \newcommand{\reff}[1]{(\ref{#1})}
 \newcommand{\V}{\,\,|\,\,\,}


\begin{document}


\title[Analytic smoothness effect for  Landau
equation]{Analytic smoothness effect of solutions for spatially
homogeneous Landau equation} \thanks{This work is partially
supported by the NSFC.}

\author{Hua Chen \and Wei-Xi LI \and Chao-Jiang Xu}
\address{School of Mathematics and Statistics, Wuhan University, 430072 Wuhan, China} \email{chenhua@whu.edu.cn}
\address{School of Mathematics and Statistics, Wuhan University, 430072 Wuhan, China} \email{wei-xi.li@whu.edu.cn}
\address{School of Mathematics and Statistics, Wuhan University, 430072 Wuhan, China
\newline\indent And
\newline\indent Universit\'{e} de Rouen, UMR 6085-CNRS, Math\'{e}matiques
Avenue de l'Universit\'{e}, BP.12, 76801 Saint Etienne du Rouvray,
France } \email{Chao-Jiang.Xu@univ-rouen.fr}
%
%
%
%

\keywords{Landau equation,  Boltzmann equation, analytic regularity,
smoothness  effect} \subjclass[2000]{ 35B65, 76P05}
\begin{abstract}
In this paper, we study the smoothness effect of Cauchy problem for
the spatially homogeneous Landau equation in the hard potential case
and the Maxwellian molecules case. We obtain the analytic smoothing
effect for the solutions under rather weak assumptions on the
initial datum.
\end{abstract}
%

%

 \maketitle

\section{Introduction}
\label{}

In this paper we study the Cauchy problem for the following
spatially homogeneous Landau equation
\begin{equation}\label{Landau}
\left\{
\begin{array}{ll}
   \partial_t f=\nabla_{v}\cdot\bigset{\int_{\Real^3}
   a(v-\vstar)[f(\vstar)\nabla_v
   f(v)-f(v)\nabla_{v}f(\vstar)]d\vstar},\\
  f(0,v)=f_0(v),
\end{array}
\right.
\end{equation}
where $f(t,v)\geq 0$ stands for the density of particles with
velocity $v\in\Real^3$ at time $t\geq0$, and $(a_{ij})$ is a
nonnegative symmetric matrix given by
\begin{equation}\label{coe}
  a_{ij}(v)=\inner{\delta_{ij}-\frac{v_iv_j}{\abs v^2}}\abs
  v^{\gamma+2}.
\end{equation}
We only consider here the condition $\gamma\in[0, 1]$, which is
called the hard potential case when $\gamma\in(0, 1]$ and the
Maxwellian molecules case when $\gamma=0$.  Set
$c=\sum\limits_{i,j=1}^3\partial_{v_iv_j}a_{ij}=-2(\gamma+3)\abs
 v^\gamma$ and
 \[
 \bar a_{ij}(t,v)=\inner{a_{ij}*f}(t,v)=\int_{\Real^3}a_{ij}(v-\vstar)f(t,\vstar)d\vstar,
 \quad \bar c=c*f.
 \]
Then the Cauchy problem \reff{Landau} can be rewritten as
\begin{equation}\label{Landau'}
\left\{
  \begin{array}{ll}
  \partial_t f=\sum\limits_{i,j=1}^3
  \bar a_{ij}\partial_{v_iv_j}f-\bar cf,\\
  f(0,v)=f_0(v),
  \end{array}
\right.
\end{equation}
which is a non-linear  diffusion equation,  for  the coefficients
$\bar a_{i,j}$ satisfy some uniformly elliptic property (see
Proposition 4 in \cite{desv-vill1}) and  $\bar a_{ij}, \bar c$
depend on the solution $f$.

The Landau equation can be obtained as a limit of the Boltzmann
equation when the collisions become grazing, cf. \cite{des1} and
references therein for more details. In this paper, we are mainly
concerned with the analytic regularity of the solutions for the
spatially homogeneous Landau equation, which gives partial support
to the conjecture on the smoothness of solutions for the Boltzmann
equation with singular (or non cutoff) cross sections. This
conjecture has been proven in certain particular cases, see
\cite{al-2,aumxy, desv-wen1} for the Sobolev smoothness and
\cite{MUXY1} for the Gevrey smoothness. Recently, a lot of progress
has been obtained on the study of the Sobolev regularity for the
solutions of Landau equations, cf \cite{C-D-H, desv-vill1, Guo,
villani, villani2} and references therein, which shows that in some
sense the Landau equation can be seen as a non-linear and non-local
analog of the hypo-elliptic Fokker-Planck equation. That means, the
weak solutions, once constructed under rather weak assumptions on
the initial datum, will become smooth or, even more, rapidly
decreasing in $v$ at infinity. In the Gevrey class frame, some
results have been obtained on the propagation of regularity for the
solutions of the Landau equation or Boltzmann equation (see
\cite{CLX,des-fur-ter,ukai}).

Motivated by the smoothness effect of heat equation, one may expect
analytic or even more ultra-analytic regularity in $t>0$ for the
solutions of the Cauchy problem \reff{Landau'}. Recently,
Morimoto-Xu \cite{MXJ} proved the ultra-analytic effect for the
Cauchy problem \reff{Landau'} of the Maxwellian molecules case,
which is understandable since in this particular case the
coefficients $a_{i,j}(v)$ are ultra-analytic functions (polynomial
functions). Here we shall consider the analytic smoothness effect in
the hard potential case, which would be more complicated since in
this case the coefficients $a_{i,j}$, given in \reff{coe}, are no
longer a polynomial functions and only analytic functions away from
the origin. So it is reasonable in this paper to consider the
analytic smoothness effects of the Cauchy problem \reff{Landau'}.

Now we give some notations used throughout the paper.  For a
mult-index $\alpha=(\alpha_1,\alpha_2,\alpha_3),$ denote
\[\abs\alpha=\alpha_1+\alpha_2+\alpha_3,\quad
\alpha!=\alpha_1!\alpha_2!\alpha_3!, \quad
\p^\alpha=\p^{\alpha_1}_{v_1}\p^{\alpha_2}_{v_2}\p^{\alpha_3}_{v_3}.
\]
We say $\beta=(\beta_1,\beta_2, \beta_3)\leq (\alpha_1,\alpha_2,
\alpha_3)=\alpha$ if $\beta_i\leq \alpha_i$ for each $i$. For a
multi-index $\alpha$ and a nonnegative integer $k$ with
$k\leq\abs\alpha,$  if no confusion occurs, we shall use $\alpha-k$
to denote some multi-index $\bar\alpha$ satisfying
$\bar\alpha\leq\alpha$ and $\abs{\bar\alpha}=\abs\alpha-k $.  As in
\cite{desv-vill1}, we denote by $M(f(t)), E(f(t))$ and $H(f(t))$
respectively  the mass, energy and entropy of the function $f(t,v)$,
i.e.,
\begin{eqnarray*}
M(f(t))=\int_{\Real^3}f(t,v)\,dv,\quad
E(f(t))={1\over2}\int_{\Real^3}f(t,v)\abs v^2\,dv,
\end{eqnarray*}
\[H(f(t))=\int_{\Real^3}f(t,v)\log f(t,v)\,dv,\]
and denote $M_0=M(f(0))$, $E_0=E(f(0))$ and $H_0=H(f(0)).$ It's
known that the solutions of the Landau equation satisfy the formal
conservation laws:
\[M(f(t))=M_0 ,\quad
E(f(t))=E_0,\quad H(f(t))\leq H_0,\qquad \forall\:t\geq 0.\] Here we
adopt the following notations,
\[
 \norm {\p^\alpha f(t,\cdot)}_{L_s^p}=\inner{\int_{\Real^3}\abs{\p^\alpha f(t,v)}^p\inner{1+\abs
 v^2}^{s/2}dv}^{{1\over p}},\quad p\geq 1,
\]
\[ \norm{f(t,\cdot)}_{H^m_s}^2=\sum\limits_{\abs\alpha\leq m}\norm{\p^\alpha f(t)}_{L^2_s}^2.\]
 In the sequel, for simplicity we always write $\norm
 {f(t)}_{L_s^p}$ instead of  $\norm
 {f(t,\cdot)}_{L_s^p}$,  etc.

Before stating our main theorem, we recall some related results
obtained in \cite{desv-vill1,villani2}. In the hard potential case,
the existence, uniqueness and Sobolev regularity of the weak
solution had been studied by Desvillettes-Villani (cf. Theorem 5,
Theorem 6 and Theorem 7 of \cite{desv-vill1}), and they proved that,
under rather weak assumptions on the initial datum (e.g. $f_0\in
L^1_{2+\delta}$ with $\delta>0,$), there exists a weak solution $f$
of the Cauchy problem \reff{Landau'} such that for all time $t_0>0,$
all integer $m\geq 0,$ and all $s>0,$
\[\sup\limits_{t\geq t_0}\norm{f(t,\cdot)}_{H^m_s}\leq C,\]
where $C$ is a constant depending only on $\gamma$, $M_0$, $E_0$,
$H_0$, $m,s$ and $t_0.$ Moreover $f(t,v)\in
C^{\infty}\biginner{\Real_t^+; \s(\Real^3)},$ where $\Real_t^+=]0,
+\infty[$ and $\s(\Real^3)$ denotes the space of smooth functions
which are rapidly decreasing in $v$ at infinity.  If  $f_0\in L^2_p$
with $p>5\gamma+15,$ then the Cauchy problem \reff{Landau'} admits a
unique smooth solution.  In the Maxwellian case, Villani
\cite{villani2} proved that the Cauchy problem \reff{Landau'} admits
a unique classical solution for any initial datum and for all $t>0,$
$f(t,v)$ is bounded and belong to  $C^\infty(\Real_v^3).$

Now let us give some equivalent definition of analytic functions.
Let $u$ be a real function defined in $\Real^N.$ We say $u$ is real
analytic in $\Real^N$ if $u\in C^\infty(\Real^N)$ and there exists a
constant $C$  such that for all multi-indices $\alpha\in\Natural^N,$
\[
  \norm{\partial^\alpha{u}}_{L^2(\Real^N)}\leq C^{|\alpha|}
  \abs\alpha!,
\]
which is equivalent to
\[
 e^{c_0(-\triangle_v)^{1\over2}}u\in L^2(\Real^N)
\]
for some constant $c_0>0,$ where $e^{c_0(-\triangle_v)^{1\over2}}u$
is the Fourier multiplier defined by
\[
  e^{c_0(-\triangle_v)^{1\over2}}u=\mathscr{F}^{-1}\inner{e^{c_0\abs\xi}\hat
  u(\xi)}.
\]

Starting from  the smooth solution, we state our main result on the
analytic regularity  as follows.

\begin{thm}\label{main result}
Let $f_0$ be the initial datum with finite mass, energy and entropy
and $f(t,v)$ be any solution of the Cauchy problem \reff{Landau'}
such that for all time $t_0>0$ and all integer $m\geq 0,$
\begin{equation}\label{uniform}
  \sup\limits_{t\geq t_0}\norm{f(t,\cdot)}_{H^m_\gamma}\leq C
\end{equation}
with  $C$ a constant depending only on $\gamma$, $M_0$, $E_0$,
$H_0$, $m$ and $t_0.$ Then for all time $t>0,$ $f(t,v)$, as a real
function of $v$ variable, is analytic in $\Real^3_v.$ Moreover, for
all time $t_0>0,$ there exists a constant $c_0>0,$ depending only on
$M_0,E_0,H_0,\gamma$ and $t_0,$ such that for all $t\geq t_0,$
\[
 \norm{e^{c_0\inner{-\triangle_v}^{1\over2}}f(t,\cdot)}_{L^2(\Real^3)}\leq C(t+1),
\]
where $C$ is a constant depending only on  $M_0,E_0,H_0,\gamma$ and
$t_0.$
\end{thm}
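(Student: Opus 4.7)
The plan is to establish, by induction on $N=\abs\alpha$, bounds of the form
\[
\sup_{t\ge t_0}\norm{\p^\alpha f(t)}_{L^2}\le M\,(t+1)\,L^{\abs\alpha}\,\abs\alpha!,
\]
with $M,L$ depending only on $M_0,E_0,H_0,\gamma$ and $t_0$. Once such bounds are available, Plancherel combined with $e^{c_0\abs\xi}\le\sum_N(c_0\abs\xi)^N/N!$ gives
\[
\norm{e^{c_0(-\triangle_v)^{1/2}}f(t)}_{L^2}\le\sum_N\frac{c_0^N}{N!}\norm{(-\triangle_v)^{N/2}f(t)}_{L^2}\lesssim M(t+1)\sum_N(c_0L)^N(N+1),
\]
which converges as soon as $c_0L<1$, delivering the desired analytic smoothing.

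\textbf{Energy identity at level $N$.} First I would apply $\p^\alpha$ to \reff{Landau'}, test against $\p^\alpha f$ in $L^2_v$ and integrate by parts. Using the weighted coercivity $\sum_{ij}\bar a_{ij}(t,v)\xi_i\xi_j\ge K(f)(1+\abs v)^\gamma\abs\xi^2$ (Proposition 4 of \cite{desv-vill1}) together with the favourable sign $\bar c=c*f\le 0$, this yields an inequality of the shape
\[
\tfrac12\tfrac{d}{dt}\norm{\p^\alpha f}_{L^2}^2+K\norm{\nabla\p^\alpha f}_{L^2_\gamma}^2\le\sum_{0<\beta\le\alpha}\binom{\alpha}{\beta}\inner{\abs{I^a_\beta}+\abs{I^c_\beta}},
\]
where the key observation is the convolution structure $\p^\beta\bar a_{ij}=a_{ij}*\p^\beta f$, $\p^\beta\bar c=c*\p^\beta f$, with
\[
I^a_\beta=\int\sum_{ij}(a_{ij}*\p^\beta f)(\p^{\alpha-\beta}\p_{ij}f)(\p^\alpha f)\,dv,\quad I^c_\beta=\int(c*\p^\beta f)(\p^{\alpha-\beta}f)(\p^\alpha f)\,dv.
\]

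\textbf{Closing the induction.} For the commutator estimates I would exploit the crude upper bound $\abs{a_{ij}(v-\vstar)}\lesssim(1+\abs v)^{\gamma+2}(1+\abs\vstar)^{\gamma+2}$ (and its analogue for $c$) to derive $\norm{(1+\abs v)^{-(\gamma+2)}(a_{ij}*\p^\beta f)}_{L^\infty}\le C\norm{\p^\beta f}_{L^2_s}$ for a suitable $s$, and then insert the induction hypothesis for $\abs\beta<N$ together with the Sobolev bound \reff{uniform} for the factor containing $\p^{\alpha-\beta}\p_{ij}f$. The combinatorial identity $\binom{\alpha}{\beta}\beta!(\alpha-\beta)!=\abs\alpha!$ produces exactly the factorial growth required, while Young's inequality absorbs the top-derivative contributions, which come with a $(1+\abs v)^{\gamma+2}$ weight, into the coercive term $K\norm{\nabla\p^\alpha f}_{L^2_\gamma}^2$, the two extra powers of $(1+\abs v)$ being paid for by \reff{uniform}. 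Integrating the resulting differential inequality from $t_0/2$ (where $f$ already lies in every Sobolev space by \reff{uniform}) up to $t$ is what would produce the linear factor $(t+1)$.

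\textbf{Main obstacle.} The principal difficulty is the endpoint $\beta=\alpha$ of $I^a_\beta$, where $a_{ij}*\p^\alpha f$ itself involves the very top-order derivative we seek to bound, so the resulting inequality is merely quadratic in $\norm{\p^\alpha f}_{L^2}$ rather than subcritical and cannot be absorbed by the coercive term directly. This borderline term must be handled either by a smoothing estimate showing that $a_{ij}*\p^\alpha f$ is genuinely of lower order than $\p^\alpha f$ (which is delicate since $a_{ij}$ is only analytic away from the origin and grows like $\abs v^{\gamma+2}$) or by a careful Gronwall-type argument over $[t_0/2,t]$. Keeping the constants $M,L$ independent of $N$, while matching the hard-potential weight $(1+\abs v)^\gamma$ of the coercive term against the $(1+\abs v)^{\gamma+2}$ arising from the coefficients, is the main technical hurdle, and is precisely what distinguishes the hard-potential case from the Maxwellian molecules case treated in \cite{MXJ}.
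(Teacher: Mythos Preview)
Your energy identity and induction scheme are close in spirit to the paper, but the way you propose to estimate $\p^\beta\bar a_{ij}$ is where the argument breaks down. Writing $\p^\beta\bar a_{ij}=a_{ij}*\p^\beta f$ and using the crude pointwise bound $\abs{a_{ij}(v-\vstar)}\lesssim(1+\abs v)^{\gamma+2}(1+\abs\vstar)^{\gamma+2}$ produces two problems simultaneously. First, the weight $(1+\abs v)^{\gamma+2}$ on the right cannot be absorbed by the coercive term, which carries only $(1+\abs v)^\gamma$; invoking \reff{uniform} to ``pay for'' the two extra powers would introduce weighted-Sobolev constants that depend on the order $N$, destroying the uniformity you need for analyticity. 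Second, you have correctly identified that the endpoint $\beta=\alpha$ is borderline, but neither a Gronwall nor a smoothing argument resolves it in the way you outline, since $a_{ij}*\p^\alpha f$ genuinely contains $\norm{\p^\alpha f}_{L^2}$ with a weight you cannot match.

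The paper's cure for both issues is a single lemma (their Lemma~\ref{a}) that estimates $\p^\beta\bar a_{ij}$ differently: one splits $a_{ij}=\psi_N a_{ij}+(1-\psi_N)a_{ij}$ with a cutoff $\psi_N$ supported near the origin whose first $N$ derivatives satisfy $\abs{\p^\lambda\psi_N}\le(LN)^{\abs\lambda}$. Near the origin one puts exactly two derivatives on $a_{ij}$ (so that $\abs{\p^{\tilde\beta}a_{ij}}\lesssim\abs v^\gamma$, matching the coercive weight) and $\abs\beta-2$ derivatives on $f$; away from the origin one leaves all derivatives on $(1-\psi_N)a_{ij}$ and exploits the real analyticity of $a_{ij}$ there, picking up a purely numerical factorial $B^{\abs\beta-2}(\abs\beta-5)!$ times $(1+\abs v)^\gamma$ and only $\norm{f}_{L^1_\gamma}$. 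The net result is
\[
\abs{\p^\beta\bar a_{ij}(t,v)}\le C\bigl(\norm{\p^{\beta-2}f(t)}_{L^2}+B^{\abs\beta-2}(\abs\beta-5)!\bigr)(1+\abs v^2)^{\gamma/2},
\]
which simultaneously fixes the weight mismatch and lowers the highest $f$-derivative by two, eliminating your endpoint obstruction. A second departure from your plan is the bookkeeping of the time variable: rather than a Gronwall from $t_0/2$, the paper localizes in time to overlapping intervals of length one with a cutoff $\varphi(t)$, and the induction hypothesis is stated as $\norm{\p^\alpha f}_{L^2}\le C_{T_0,T_1}A^{m+1}\rho^{-m}(m-2)!$ on $[T_0+\rho,T_1-\rho]$, together with a companion bound on $\int\norm{\nabla\p^{\tilde\alpha}f}_{L^2_\gamma}^2\,dt$; the latter weighted time-integral is what actually feeds back into the right-hand side of the differential inequality, and is absent from your scheme.
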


\begin{rmk}
As a consequence, the solutions given in \cite{desv-vill1} are  real
analytic in $\Real_v^3$ for any $t>0$.
\end{rmk}
\begin{rmk}
The result of Theorem \ref{main result} can be extended to any space
dimensional case.
\end{rmk}

The plan of the paper is as follows: In section \ref{section2} we
present the proof of the main result. Section \ref{section3} is
devoted to the proof of the lemma \ref{crucial} in the section
\ref{section2} which is crucial to the proof of the main result
here.

\section{Proof of the main result}
\label{section2}

This section is devoted to the proof of  the main results.  To
simplify the notations, in the sequel we always use $\sum_{1\leq
\abs\beta\leq \abs\mu}$ to denote the summation over all the
multi-indices $\beta$ satisfying $\beta\leq\mu$ and $1\leq
\abs\beta\leq\abs\mu$. Likewise $\sum_{1\leq \abs\beta\leq
\abs\mu-1}$ denotes the summation over all the multi-indices $\beta$
satisfying $\beta\leq\mu$ and $1\leq \abs\beta\leq\abs\mu-1,$ etc.
We begin with the following lemma.

\begin{lem}\label{lemma1}
For all multi-indices $\mu\in\Natural^3, \abs\mu\geq 2,$ we have
\begin{equation}\label{sum1}
  \sum\limits_{1\leq \abs\beta\leq \abs\mu-1}\frac{\abs\mu}
  {\abs\beta^{4}(\abs\mu-\abs\beta)}\leq
 24,
\end{equation}
and
\begin{equation}\label{sum2}
  \sum\limits_{1\leq \abs\beta\leq \abs\mu-1}\frac{\abs\mu}
  {\abs\beta^{3}(\abs\mu-\abs\beta)^2}\leq 24.
\end{equation}
\end{lem}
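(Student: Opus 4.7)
The plan is to observe that both summands depend on $\beta$ only through $|\beta|$, so each sum collapses to $\sum_{k=1}^{|\mu|-1} N_k(\mu)\, a_k$ where $N_k(\mu) := \#\{\beta\leq\mu : |\beta|=k\}$ and $a_k$ is the appropriate weight. The key combinatorial input is the two-sided bound
\[
N_k(\mu) \;\leq\; \min\left\{\binom{k+2}{2},\ \binom{|\mu|-k+2}{2}\right\}\!,
\]
where the first estimate is just the total number of multi-indices in $\Natural^3$ of length $k$, and the second follows from the involution $\beta\leftrightarrow\mu-\beta$, which gives $N_k(\mu)=N_{|\mu|-k}(\mu)$.

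Next I would split the $k$-range at the midpoint $k=\lfloor|\mu|/2\rfloor$. On the lower half $1\leq k\leq|\mu|/2$ I use $N_k(\mu)\leq(k+1)(k+2)/2$ together with $|\mu|-k\geq|\mu|/2$, which for \eqref{sum1} reduces the claim to the comparison
\[
\sum_{1\leq|\beta|\leq|\mu|/2}\frac{|\mu|}{|\beta|^4(|\mu|-|\beta|)}
\;\leq\; \sum_{k=1}^{\infty}\frac{(k+1)(k+2)}{k^4}
\;=\; \zeta(2)+3\zeta(3)+2\zeta(4)\;<\;8,
\]
and a parallel numerical bound handles \eqref{sum2}. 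On the upper half I substitute $j=|\mu|-k$ and use instead the symmetric bound $N_k(\mu)\leq(j+1)(j+2)/2$ with $k\geq|\mu|/2$; in both inequalities this produces a negative power of $|\mu|$ multiplied by a partial sum in $j$ that grows only polynomially, so the upper-half contribution is $O(1/|\mu|)$ and is, in particular, bounded by a small absolute constant.

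The final step is arithmetic bookkeeping: adding the two halves and separately inspecting the smallest cases $|\mu|=2$ and $|\mu|=3$ (where the sums contain only one or two nonzero terms and can be checked directly), I would verify that the totals both lie comfortably below $24$. The only real obstacle is this constants-tracking, since $24$ is not sharp and no one-line estimate is available; one must apply the $N_k$ bound plus midpoint split to each of the two sums and then add. Given the generous slack in the target constant, no serious difficulty is anticipated once the combinatorial bound and the halfway split are in place.
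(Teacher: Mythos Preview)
Your proposal is correct and follows essentially the same route as the paper: collapse the sum to a single-index sum weighted by the count $N_k$ of multi-indices of length $k$, bound $N_k\leq\binom{k+2}{2}$, and split at the midpoint. The paper does not invoke the symmetry $N_k(\mu)=N_{|\mu|-k}(\mu)$; on the upper half it simply keeps $N_l\leq\tfrac{(l+1)(l+2)}{2}$ and uses $l\geq(|\mu|+1)/2$ to reduce to $\tfrac{12}{|\mu|+1}\sum_j 1/j<12$, but this is a minor variant of your argument and the final constant is the same.
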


\begin{proof}
For each positive integer $l,$ we denote by $N\set{\abs\beta=l}$ the
number of the multi-indices $\beta$ with $\abs\beta=l.$ In the case
when the space dimension equals to 3, one has
\[N\set{\abs\beta=l}=\frac{(l+2)!}{2!\,l!}={1\over2}(l+1)(l+2).\]
Thus we can compute directly
\begin{eqnarray*}
  \sum\limits_{1\leq \abs\beta\leq \abs\mu-1}\frac{\abs\mu}
  {\abs\beta^{4}(\abs\mu-\abs\beta)}\leq\sum\limits_{l=1}^{
  \abs\mu-1}\sum\limits_{\abs\beta=l}\frac{\abs\mu}
  {l^{4}(\abs\mu-l)}\leq \sum\limits_{l=1}^{
  \abs\mu-1}\frac{3\abs\mu}{l^{2}(\abs\mu-l)}.
\end{eqnarray*}
Without loss of generality, we may assume $\abs\mu-1$ is an even
integer. Then the inequality above can be rewritten as
\begin{eqnarray*}
  \sum\limits_{1\leq \abs\beta\leq \abs\mu-1}\frac{\abs\mu}
  {\abs\beta^{4}(\abs\mu-\abs\beta)}\leq\sum\limits_{l=1}^{
  (\abs\mu-1)/2}\frac{3\abs\mu}{l^{2}(\abs\mu-l)}+\sum\limits_{l=(\abs\mu+1)/2}^{
  \abs\mu-1}\frac{3\abs\mu}{l^{2}(\abs\mu-l)}.
\end{eqnarray*}
In the case when  $l\leq\frac{\abs{\mu}-1}{2}$, we have
$\abs\mu-l\geq \frac{\abs{\mu}+1}{2}.$ Then it follows that
\begin{eqnarray*}
  \sum\limits_{l=1}^{
  (\abs\mu-1)/2}\frac{3\abs\mu}{l^{2}(\abs\mu-l)}\leq \frac{6\abs\mu}{\abs\mu+1}\sum\limits_{l=1}^{
  (\abs\mu-1)/2}\frac{1}{l^{2}}<12.
\end{eqnarray*}
In the case when $l\geq \frac{\abs{\mu}+1}{2},$ then we have
\begin{eqnarray*}
  \sum\limits_{l=(\abs\mu+1)/2}^{
  \abs\mu-1}\frac{3\abs\mu}{l^{2}(\abs\mu-l)}\leq \frac{12}{\abs\mu+1}\sum\limits_{l=(\abs\mu+1)/2}^{
  \abs\mu-1}\frac{1}{\abs\mu-l}<12.
\end{eqnarray*}
Combination of the above three inequalities gives the desired
inequality \reff{sum1}. Similarly we can deduce the inequality
\reff{sum2}. Lemma 2.1 is proved.
\end{proof}

Next, we give a following crucial lemma, which is important in the
proof of the main result. Throughout the paper we always assume
$(-i)!=1$ for nonnegative integer $i$.

\begin{lem}\label{crucial}
There exist constants $B$, $C_1$ and $C_2>0$  with $B$ depending
only on the dimension  and $C_1$, $C_2$ depending only on $M_0$,
$E_0$, $H_0$ and $\gamma,$  such that for all multi-indices
$\mu\in\Natural^3$ with $\abs\mu\geq2$ and all $t>0,$ we have
\begin{eqnarray*}
 &&\frac{d}{dt}\norm{\p^\mu f(t)}_{L^2}^2+C_1\norm{\nabla_v\p^\mu  f(t) }_{L^2_\gamma}^2\
  \leq C_2\abs\mu^2\norm{\nabla_v\p^{\mu-1} f(t)}_{L^2_\gamma}^2+{}\\
  &{}&~+C_2 \sum\limits_{2\leq\abs\beta\leq\abs\mu}
  C_{\mu }^\beta \norm{\nabla_v\p^{\mu -\beta+1} f(t)}_{L^2_\gamma}
  \cdot\norm{\nabla_v\p^{\mu-1}
  f(t)}_{L^2_\gamma}\cdot\com{G(f(t))}_{\beta-2}+{}\\
  &{}&~+C_2\sum\limits_{0\leq\abs\beta\leq\abs\mu}
   C_{\mu }^\beta\norm{
  \p^\beta f(t)}_{L^2_\gamma}\cdot\norm{\nabla_v\p^{\mu-1} f(t)}_{L^2_\gamma}
  \cdot\com{G(f(t))}_{{\mu}-\beta},
\end{eqnarray*}
where $C_\mu^\beta=\frac{\mu!}{(\mu-\beta)!\beta!}$ is the binomial
coefficients,
$\com{G(f(t))}_{\omega}=\norm{\p^{\omega}f(t)}_{L^2}+B^{\abs\omega}\inner{\abs\omega-3}!,$
 and $\mu-l$ denotes some multi-index $\tilde\mu$ satisfying
$\tilde\mu\leq\mu$ and $\abs{\tilde\mu}=\abs\mu-l.$
\end{lem}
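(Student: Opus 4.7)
The strategy is a weighted $L^2$ energy estimate for $\p^\mu f$ obtained by applying $\p^\mu$ to equation \reff{Landau'}, pairing with $\p^\mu f$, expanding each product of coefficient and derivative of $f$ by Leibniz' formula, and then estimating each contribution according to the size of $|\beta|$. Pairing gives
\[
\frac{1}{2}\frac{d}{dt}\norm{\p^\mu f}_{L^2}^2 = \sum_{i,j}\int\p^\mu(\bar a_{ij}\p_{v_iv_j}f)\,\p^\mu f\,dv - \int\p^\mu(\bar c f)\,\p^\mu f\,dv,
\]
and Leibniz splits each integrand into a sum over $\beta\leq\mu$ with binomial weight $C_\mu^\beta$.

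For the principal term ($\beta=0$) in the diffusion contribution, two integrations by parts combined with the uniform ellipticity $\bar a_{ij}\xi_i\xi_j\geq K(1+|v|^2)^{\gamma/2}|\xi|^2$ from Proposition~4 of \cite{desv-vill1} produce the absorbing bound $-C_1\norm{\nabla_v\p^\mu f}_{L^2_\gamma}^2$; the lower-order correction terms (from $\p_i\bar a_{ij}$ and from the identity $\sum_{ij}\p^2_{ij}\bar a_{ij}=\bar c$) are controlled by moments of $f$ via the conservation of mass, energy, and entropy.

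For $|\beta|=1$, a single integration by parts on $\p^{\mu-\beta}\p^2_{ij}f$ rebalances one derivative, and the pointwise bound $|\p^\beta\bar a_{ij}(v)|\leq C(1+|v|)^{\gamma+1}\norm{f}_{L^1_{\gamma+2}}$ combined with weighted Cauchy--Schwarz gives, after summing over the at most $|\mu|$ such $\beta$ with $C_\mu^\beta=\mu_i$, the bound $C_2|\mu|^2\norm{\nabla_v\p^{\mu-1}f}_{L^2_\gamma}^2$. For $2\leq|\beta|\leq|\mu|$, redistribute derivatives as $\p^\beta\bar a_{ij}=(\p^2 a_{ij})*\p^{\beta-2}f$; the key pointwise bound
\[
|\p^\beta\bar a_{ij}(v)|\leq C(1+|v|)^\gamma\,\com{G(f)}_{\beta-2},
\]
together with one integration by parts (shifting a derivative onto $\p^\mu f$) and Cauchy--Schwarz in $L^2_\gamma$, yields the second sum of the statement. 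The $\bar c f$ contribution is handled identically, using $\p^\beta\bar c=c*\p^\beta f$ and an analogous pointwise estimate involving $\com{G(f)}_{\mu-\beta}$; a single integration by parts to produce the factor $\nabla_v\p^{\mu-1}f$ then gives the final sum indexed by $0\leq|\beta|\leq|\mu|$.

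The main obstacle is the pointwise estimate $|\p^\omega\bar a_{ij}(v)|,\,|\p^\omega\bar c(v)|\leq C(1+|v|)^\gamma\com{G(f)}_\omega$ with the precise factorial growth $B^{|\omega|}(|\omega|-3)!$. Since $a_{ij}$ and $c$ fail to be analytic at the origin for $\gamma\in(0,1]$ but are analytic elsewhere, one must split $a_{ij}=\chi a_{ij}+(1-\chi)a_{ij}$ with $\chi$ a cutoff near $0$: the first piece is treated via its finite smoothness together with a convolution estimate against $f$, while the analytic tail contributes the factorial $(|\omega|-3)!$ with a geometric constant $B$ depending only on the space dimension. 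The uniform tuning of these constants so that all factorials later absorb into $C_\mu^\beta$ is what makes this lemma the core of the analytic-regularity argument; its verification occupies the separate Section~\ref{section3}.
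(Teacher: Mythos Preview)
Your overall strategy matches the paper's: an $L^2$ energy identity, Leibniz expansion, case splitting on $|\beta|$, and the near/far decomposition of $a_{ij}$ to obtain the pointwise bound with factorial growth. There are, however, two places where the sketch does not close as written.

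First, the $|\beta|=1$ case. After one integration by parts you face
\[
\sum_{|\beta|=1}C_\mu^\beta\int (\partial^\beta\bar a_{ij})(\partial_{v_j}\partial^{\mu-\beta}f)(\partial_{v_i}\partial^\mu f)\,dv,
\]
and your pointwise bound $|\partial^\beta\bar a_{ij}|\lesssim (1+|v|)^{\gamma+1}$ carries one weight too many: any Cauchy--Schwarz split then forces an $L^2_{\gamma+2}$ norm on one of the factors, not $L^2_\gamma$ as the lemma claims, and this cannot be absorbed. The paper fixes this by a symmetrization trick: writing $\partial_{v_i}\partial^\mu f=\partial^\beta(\partial_{v_i}\partial^{\mu-\beta}f)$ and integrating by parts once more in the $\partial^\beta$ direction yields
\[
(II)_2=-(II)_2+\sum_{|\beta|=1}C_\mu^\beta\int(\partial^{2\beta}\bar a_{ij})(\partial_{v_j}\partial^{\mu-\beta}f)(\partial_{v_i}\partial^{\mu-\beta}f)\,dv,
\]
so $(II)_2$ equals half the last integral. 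Since now $|\partial^{2\beta}\bar a_{ij}|\lesssim(1+|v|^2)^{\gamma/2}$, this gives $C|\mu|\,\|\nabla_v\partial^{\mu-1}f\|_{L^2_\gamma}^2$ with the correct weight.

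Second, for the pointwise bound on $\partial^\beta\bar a_{ij}$ with $|\beta|\geq 2$, a \emph{fixed} smooth cutoff $\chi$ is not enough: in the Leibniz expansion of $\partial^\beta[(1-\chi)a_{ij}]$ up to $|\beta|$ derivatives can fall on $\chi$, and a generic $C^\infty_0$ function gives no uniform factorial control of those. The paper uses an $|\beta|$-dependent cutoff $\psi_{|\beta|}$ (the H\"ormander construction, their Lemma~\ref{analyticfunction}) satisfying $\sup|\partial^\lambda\psi_N|\leq (LN)^{|\lambda|}$ for $|\lambda|\leq N$; this is precisely what produces the $B^{|\beta|-2}(|\beta|-5)!$ growth with a purely dimensional constant $B$. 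Finally, for $|\beta|\geq 2$ and for the $\bar c$ contribution no integration by parts is needed or wanted: one simply applies the pointwise bound together with Cauchy--Schwarz and then uses the trivial inequalities $\|\partial^\mu f\|_{L^2_\gamma}\leq\|\nabla_v\partial^{\mu-1}f\|_{L^2_\gamma}$ and $\|\partial_{v_iv_j}\partial^{\mu-\beta}f\|_{L^2_\gamma}\leq\|\nabla_v\partial^{\mu-\beta+1}f\|_{L^2_\gamma}$, which already give the derivative distribution in the statement.
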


The proof of Lemma \ref{crucial} will be given in the section
\ref{section3}.

\begin{prop}\label{prp}

Let $f_0$ be the initial datum with finite mass, energy and entropy
and $f(t,v)$  be any solution of the Cauchy problem \reff{Landau'}
satisfying the condition \reff{uniform}. Then for any $T_0,T_1$ with
$0<T_0<T_1<+\infty,$ there exists a constant $A$, depending only on
$M_0$, $E_0$, $H_0$,$\gamma$ and $T_0$,  such that for any $\rho$
with $0<\rho<\min\set{{1\over4},(T_1-T_0)/2},$ and any nonnegative
integer $m,$ the following estimate
\begin{equation}\label{goal}
 \sup_{s\in\Omega_\rho}\norm{\p^\alpha f(s)}_{L^2}
 +\set{\int_{\Omega_\rho}\|\nabla_v
 \p^{\tilde\alpha}f(t)\|_{L^2_{\gamma}}^2\,dt}^{{1\over2}}\leq
 \frac{C_{T_0,T_1}\:A^{m+1}}{\rho^m}\com{(m-2)!}
\end{equation}
holds for any  multi-indices $\alpha$, $\tilde\alpha$ with
$\abs\alpha=\abs{\tilde\alpha}=m,$ where the interval $\Omega_\rho$
and the constant $C_{T_0,T_1}$ are given by
\[
  \Omega_\rho=[T_0+\rho,T_1-\rho],\qquad C_{T_0,T_1}=2(T_1-T_0+1).
\]
\end{prop}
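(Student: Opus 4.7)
The argument is by induction on $m = |\alpha|$. For the base cases $m = 0, 1, 2$, the convention $(-i)! = 1$ reduces the right-hand side of \eqref{goal} to $C_{T_0,T_1}A^{m+1}/\rho^m$, and the uniform bound \eqref{uniform} (applied with Sobolev exponent $m+1$) controls both the $L^2$-supremum of $\partial^\alpha f$ and the $L^2_\gamma$-gradient integral of $\partial^{\tilde\alpha} f$ by constants depending only on $M_0, E_0, H_0, \gamma$ and $T_0$. Fixing $A$ large relative to these constants and to $T_1 - T_0$ settles the base cases.

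For the inductive step, let $m \geq 3$ and assume \eqref{goal} holds for every order $m' \leq m-1$ and every admissible radius. Given $\mu$ with $|\mu| = m$, introduce a time cutoff $\chi \in C_0^\infty(\mathbb{R})$ with $\chi \equiv 1$ on $\Omega_\rho$, $\mathrm{supp}\,\chi \subset \Omega_{\rho'}$ for $\rho' = \rho(1 - 1/m)$, and $|\chi'| \leq Cm/\rho$. Multiplying Lemma \ref{crucial} by $\chi^2$ and integrating from $T_0$ yields
\[
\sup_{\Omega_\rho}\|\partial^\mu f\|_{L^2}^2 + C_1\int_{\Omega_\rho}\|\nabla\partial^\mu f\|_{L^2_\gamma}^2\,dt \leq \frac{Cm}{\rho}\int_{\Omega_{\rho'}}\|\partial^\mu f\|_{L^2}^2\,dt + \int_{\Omega_{\rho'}}R(t)\,dt,
\]
where $R$ denotes the three-term right-hand side of Lemma \ref{crucial}. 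The cutoff error, which still contains $\|\partial^\mu f\|$ at the current order, is handled by writing $\partial^\mu = \partial_{v_i}\partial^{\mu - e_i}$ for some $i$ with $\mu_i \geq 1$, so that $\|\partial^\mu f\|_{L^2} \leq \|\nabla\partial^{\mu - e_i} f\|_{L^2_\gamma}$, and invoking the IH at order $m-1$ on $\Omega_{\rho'}$.

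Term (a) of $R$ is then estimated directly by the IH at order $m-1$. For sums (b) and (c), Cauchy--Schwarz in time splits each triple product into two $L^2_t L^2_{v,\gamma}$-gradient norms controlled by the IH at appropriate lower orders, multiplied by $\sup_t [G(f(t))]_\omega$. This last supremum is bounded by the IH applied to $\|\partial^\omega f\|_{L^2}$ when $|\omega| \geq 3$ and by the uniform estimate \eqref{uniform} when $|\omega| \leq 2$; in both cases the explicit factorial piece $B^{|\omega|}(|\omega|-3)!$ is carried along separately. The pieces in (c) with $\beta = 0$ and $\beta = \mu$, which reintroduce $\|\partial^\mu f\|$ at the current order, are dealt with by Young's inequality, transferring the $\|\partial^\mu f\|^2$ portion to the left-hand side.

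After these substitutions, collecting every contribution over the common denominator $\rho^{2m}[(m-2)!]^2$ turns the binomial $C_\mu^\beta$ combined with the factorials $(|\beta|-4)!\,(m-|\beta|-1)!$ and $(|\beta|-3)!\,(m-|\beta|-1)!$ into sums of precisely the shape $\sum |\mu|/(|\beta|^4(|\mu|-|\beta|))$ and $\sum |\mu|/(|\beta|^3(|\mu|-|\beta|)^2)$, which are bounded uniformly in $m$ by Lemma \ref{lemma1}. The shrinkage $\rho' = \rho(1 - 1/m)$ contributes only the harmless factor $(\rho/\rho')^{2m} \leq e^2$. Each resulting contribution then acquires a multiplicative constant of order $C/A$ or $C/A^2$, so choosing $A$ sufficiently large in terms of $B$, $C_1$, $C_2$, $C_{T_0,T_1}$, $\gamma$, $M_0$, $E_0$ and $H_0$ closes the induction. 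The principal obstacle is precisely this combinatorial bookkeeping: verifying that the precise combinations of factorials and binomial coefficients coming out of the IH applied to each factor in (b) and (c) collapse, after the telescoping, into exactly the sums controlled by Lemma \ref{lemma1}, and that the Young-type absorption together with the $m$-dependent cutoff shrinkage remain compatible with the target analytic rate $[(m-2)!]/\rho^m$.
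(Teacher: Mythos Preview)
Your overall strategy---induction on $m$, the energy inequality from Lemma \ref{crucial} multiplied by a time cutoff supported on a slightly larger interval, Cauchy--Schwarz in time, and the combinatorial control via Lemma \ref{lemma1}---matches the paper's. But there is a genuine gap: as written, your constant $A$ depends on $T_1$, and the proposition requires that it depend only on $M_0,E_0,H_0,\gamma,T_0$.

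The place this enters is the triple products in sums (b) and (c). With your induction hypothesis carrying the prefactor $C_{T_0,T_1}$, each of the three factors in (b) (two time-integral norms and one supremum $[G(f)]_\omega$) picks up a copy of $C_{T_0,T_1}$ from the IH, giving $C_{T_0,T_1}^3$ on the right, while the squared target only has $C_{T_0,T_1}^2$. You then absorb the extra factor by taking $A\gtrsim C_{T_0,T_1}$, exactly as you wrote in your closing sentence---but this is not permitted. The independence of $A$ from $T_1$ is not cosmetic: in the proof of Theorem \ref{main result} one takes $T_1=t+3t_0/4$ with $t$ arbitrary, so an $A$ growing with $T_1$ would destroy the uniform-in-$t$ analyticity radius.

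The paper avoids this by first covering $\Omega_\rho$ with at most $N_0\leq 2(T_1-T_0+1)=C_{T_0,T_1}$ overlapping intervals $\Omega_{\rho,j}$ of length at most $1$, and then running the entire induction on a single $\Omega_{\rho,j}$ to prove
\[
\sup_{s\in\Omega_{\rho,j}}\norm{\p^\alpha f(s)}_{L^2}
 +\Big\{\int_{\Omega_{\rho,j}}\|\nabla_v\p^{\tilde\alpha}f\|_{L^2_\gamma}^2\,dt\Big\}^{1/2}
 \leq \frac{A^{m+1}}{\rho^m}\bigl[(m-2)!\bigr],
\]
with no $C_{T_0,T_1}$ at all. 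On unit intervals the time integrals in the base case and in the constant part $B^{|\omega|}(|\omega|-3)!$ of $[G(f)]_\omega$ contribute no length factor, and the cubic products close with $A$ depending only on $C_1,C_2,B$ and the $H^2_\gamma$ bound from \eqref{uniform}. The factor $C_{T_0,T_1}$ then appears only once, at the very end, when one passes from a single $\Omega_{\rho,j}$ back to $\Omega_\rho$. Inserting this decomposition at the start of your argument would repair the proof; the rest of your outline is sound.
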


\begin{rmk}
  Note that the constant $A$ in \reff{goal} is independent of
  $T_1,$ which can be deduced from the fact that all $H_\gamma^m$
  norms of $f$ are bounded uniformly in time (see the assumption
  \reff{uniform} above). In fact, the constant $A$ can be calculated explicitly.
   This can be seen in the process of following proof.
\end{rmk}

\begin{proof}[Proof of Proposition \ref{prp}]
For any $\rho$ with $0<\rho<\min\set{{1\over4},(T_1-T_0)/2},$ we
define $\Omega_{\rho,j}, j\geq 1,$ by setting
\[
  \Omega_{\rho,j}=[t_j+\rho, t_j+1-\rho]
\]
with $t_j=T_0+\frac{j-1}{2}.$ Observe $\rho<{1\over4},$ then we can
find a positive integer $N_0$ with $N_0\leq 2(T_1-T_0+1),$ such that
\[
  \Omega_\rho=[T_0+\rho, T_1-\rho]\subset
  \bigcup\limits_{j=1}^{N_0}\Omega_{\rho,j}.
\]
Hence the desired estimate \reff{goal} will follow if we can find a
constant $A$, depending only on $T_0,$ $M_0$, $E_0$, $H_0$ and
$\gamma$, such that for any $\rho$ with
$0<\rho<\min\set{{1\over4},(T_1-T_0)/2},$ and any nonnegative
integer $m,$ the following estimate
\begin{equation}\label{+goal}
 \sup_{s\in\Omega_{\rho,j}}\norm{\p^\alpha f(s)}_{L^2}
 +\set{\int_{\Omega_{\rho,j}}\|\nabla_v \p^{\tilde\alpha}f(t)\|_{L^2_{\gamma}}^2\,dt}^{{1\over2}}\leq
 \frac{A^{m+1}}{\rho^m}\com{(m-2)!}
\end{equation}
holds for all  multi-indices $\alpha$, $\tilde\alpha$ with
$\abs\alpha=\abs{\tilde\alpha}=m,$ and  all $1\leq j\leq N_0.$

We shall use induction on $m$ to prove the estimate \reff{+goal}.
First, we take a constant $A$ large enough such that
\begin{equation}\label{ac}
 A\geq 2\sup_{s\geq T_0}\norm{f(s)}_{H^2_\gamma}+2.
\end{equation}
In view of \reff{uniform}, we see that the constant $A$ depends only
on $T_0,$ $M_0$, $E_0$, $H_0$ and $\gamma.$ Observing that
$|\Omega_{\rho,j}|,$ the Lebesgue measure of $\Omega_{\rho,j},$ is
less than 1 and that
\[
  \sup_{s\in\Omega_{\rho,j}}\norm{f(s)}_{H^2_\gamma}\leq
  \sup_{s\geq t_j}\norm{f(s)}_{H^2_\gamma}\leq \sup_{s\geq
  T_0}\norm{f(s)}_{H^2_\gamma}\leq {A\over2}, \quad 1\leq j\leq N_0,
\]
we compute, for any $\gamma, \tilde\gamma$ with
$\abs\gamma=\abs{\tilde\gamma}\leq 1, $
\begin{eqnarray*}
  \sup_{s\in\Omega_{\rho,j}}\norm{\p^\gamma f(s)}_{L^2}
  +\set{\int_{\Omega_{\rho,j}}\|\nabla_v \p^{\tilde\gamma} f(t)\|_{L^2_{\gamma}}^2\,dt}^{{1\over2}}
  \leq {A\over2}+{A\over2}\leq A, \quad 1\leq j\leq N_0,
\end{eqnarray*}
which implies that the estimate \reff{+goal} holds for $m=0, 1$.

We assume, for some integer $k\geq2,$ the estimate \reff{+goal}
holds for all $m$ with $m\leq k-1$. We now need to prove the
validity of \reff{+goal} for $m=k,$ or equivalently, to show the
following two estimates: for any
$0<\rho<\min\set{{1\over4},~(T_1-T_2)/2},$
\begin{equation}\label{first term}
\sup_{s\in\Omega_{\rho,j}}\norm{\p^\alpha f(s)}_{L^2}\leq {1\over
2}\frac{A^{\abs{\alpha}+1}}{\rho^{\abs{\alpha}}}\com{(\abs\alpha-2)!},\quad
\forall~\abs\alpha=k,
\end{equation}
and
\begin{equation}\label{second term}
 \set{\int_{\Omega_{\rho,j}}\|\nabla_v
 \p^{\tilde\alpha}f(t)\|_{L^2_{\gamma}}^2dt}^{1\over2}\leq
 {1\over2}\frac{A^{\abs{\tilde \alpha}+1}}{\rho^{\abs{\tilde\alpha}}}\com{(\abs{\tilde\alpha}-2)!},\quad
\forall~\abs{\tilde\alpha}=k.
\end{equation}
In the following discussion, we fix $j, \rho, \alpha$ and
$\tilde\alpha,$ with $1\leq j\leq N_0,
0<\rho<\min\set{{1\over4},(T_1-T_0)/2}$ and
$\abs\alpha=\abs{\tilde\alpha}=k$. In this case we introduce a
cut-off function $\vpi(t)$ which is a smooth function with compact
support in $\Omega_{\tilde\rho,j}$, where $
\tilde\rho=\frac{k}{k+1}\rho,$ and satisfies $0\leq\vpi\leq1$ and
$\varphi=1$ in $\Omega_{\rho,j}.$ It is easy to see that
  \begin{equation}\label{vpi}
  \sup\limits_{t\in\Real}\abs{\frac{d\varphi(t)}{dt}}\leq
      \bar C\:k/\rho,
  \end{equation}
where the constant $\bar C$ is independent of $k$ and $\rho,$ and
\begin{equation}\label{fact}
 {1\over{\rho}^{k}}\leq{1\over{\tilde\rho}^{k}}={1\over{\rho}^{k}}\times\big({{k+1}\over{k}}\big)^{k}
\leq  {3\over{\rho}^{k}}.
\end{equation}

First we prove the estimate \reff{first term}. By using Lemma
\ref{crucial}, one has
\begin{eqnarray*}
  &&\frac{d}{dt}\norm{\p^\alpha f(t)}_{L^2}^2+C_1\norm{\nabla_v\p^\alpha  f(t) }_{L^2_\gamma}^2
  \leq C_2\abs\alpha^2\norm{\nabla_v\p^{\alpha-1} f(t)}_{L^2_\gamma}^2+{}\\
  &{}&~~+C_2 \sum\limits_{2\leq\abs\beta\leq\abs\alpha}
  C_{\alpha }^\beta \norm{\nabla_v\p^{\alpha -\beta+1} f(t)}_{L^2_\gamma}
   \norm{\nabla_v\p^{\alpha-1}
  f(t)}_{L^2_\gamma}\cdot\com{G(f(t))}_{\beta-2}+{}\\
  &{}&~~+C_2\sum\limits_{0\leq\abs\beta\leq\abs\alpha}
   C_{\alpha }^\beta\norm{
  \p^\beta f(t)}_{L^2_\gamma}\cdot\norm{\nabla_v\p^{\alpha-1} f(t)}_{L^2_\gamma}
  \cdot\com{G(f(t))}_{{\alpha}-\beta}.
\end{eqnarray*}
Rewriting  the last term of the right hand side as
\begin{eqnarray*}
  &&C_2\norm{
  f(t)}_{L^2_\gamma}\cdot\norm{\nabla_v\p^{\alpha-1} f(t)}_{L^2_\gamma}
  \cdot\com{G(f(t))}_{{\alpha}}\\
  &&+C_2\sum\limits_{1\leq\abs\beta\leq\abs\alpha}
   C_{\alpha }^\beta\norm{
  \p^\beta f(t)}_{L^2_\gamma}\cdot\norm{\nabla_v\p^{\alpha-1} f(t)}_{L^2_\gamma}
  \cdot\com{G(f(t))}_{{\alpha}-\beta},
\end{eqnarray*}
we obtain that
\begin{eqnarray*}
  &&\frac{d}{dt}\norm{\p^\alpha f(t)}_{L^2}^2+C_1\norm{\nabla_v\p^\alpha  f(t) }_{L^2_\gamma}^2
  \leq C_2\abs\alpha^2\norm{\nabla_v\p^{\alpha-1} f(t)}_{L^2_\gamma}^2+{}\\
  &{}&~~+C_2\norm{
  f(t)}_{L^2_\gamma}\cdot\norm{\nabla_v\p^{\alpha-1} f(t)}_{L^2_\gamma}
  \cdot\com{G(f(t))}_{{\alpha}}+{}\\
  &{}&~~+C_2 \sum\limits_{2\leq\abs\beta\leq\abs\alpha}
  C_{\alpha }^\beta \norm{\nabla_v\p^{\alpha -\beta+1} f(t)}_{L^2_\gamma}
  \norm{\nabla_v\p^{\alpha-1}
  f(t)}_{L^2_\gamma}\cdot\com{G(f(t))}_{\beta-2}+{}\\
  &{}&~~+C_2\sum\limits_{1\leq\abs\beta\leq\abs\alpha}
   C_{\alpha }^\beta\norm{
  \p^\beta f(t)}_{L^2_\gamma}\cdot\norm{\nabla_v\p^{\alpha-1} f(t)}_{L^2_\gamma}
  \cdot\com{G(f(t))}_{{\alpha}-\beta}.
\end{eqnarray*}
Multiplying by the cut-off function $\vpi(t)$ in the both sides of
the inequality above to get
\begin{eqnarray*}
  &&\frac{d}{dt}\inner{\varphi(t)\norm{\p^\alpha f(t)}_{L^2}^2}+C_1\varphi(t)\norm{\nabla_v\p^\alpha  f(t)
  }_{L^2_\gamma}^2\\
  &&{}\leq \frac{d\varphi}{dt}\cdot\norm{\p^\alpha f(t)}_{L^2}^2+C_2\cdot\varphi(t)\abs\alpha^2
  \norm{\nabla_v\p^{\alpha-1} f(t)}_{L^2_\gamma}^2+{}\\
  &{}&+C_2\cdot\varphi(t)~\norm{f(t)}_{L^2_\gamma}\cdot\norm{\nabla_v\p^{\alpha-1}f(t)}_{L^2_\gamma}
  \cdot\com{G(f(t))}_{{\alpha}}+{}\\
  &{}&+C_2 \varphi(t)\sum\limits_{2\leq\abs\beta\leq\abs\alpha}
  C_{\alpha }^\beta \norm{\nabla_v\p^{\alpha -\beta+1} f(t)}_{L^2_\gamma}
  \norm{\nabla_v\p^{\alpha-1}
  f(t)}_{L^2_\gamma}\com{G(f(t))}_{\beta-2}+{}\\
  &{}&+C_2\cdot\varphi(t)\sum\limits_{1\leq\abs\beta\leq\abs\alpha}
   C_{\alpha }^\beta\norm{
  \p^\beta f(t)}_{L^2_\gamma}\cdot\norm{\nabla_v\p^{\alpha-1} f(t)}_{L^2_\gamma}
  \cdot\com{G(f(t))}_{{\alpha}-\beta}.
\end{eqnarray*}
In the sequel,  we set
\begin{equation}\label{++G}
\com{G(f)}_{\rho,\beta}=\sup_{t\in\Omega_{\rho,j}}\com{G(f(t))}_{\beta}
=\sup_{t\in\Omega_{\rho,j}}\norm{\p^\beta
f(t)}_{L^2}+B^{\abs\beta}(\abs\beta-3)!.
\end{equation}
Since supp $\varphi\subset\Omega_{\tilde\rho,j}$ with
$\tilde\rho={{k\rho}\over {k+1}},$ and $\varphi(t)=1$ for all
$t\in\Omega_{\rho,j}$ and $\varphi(t_j)=0$, then for any
$s\in\Omega_{\rho,j},$  we integrate the inequality above over the
interval $[t_j, s]\subset[t_j,t_j+1-\rho]$ to get, from Cauchy
inequality, that
\begin{eqnarray*}
  \norm{\p^\alpha f(s)}_{L^2}^2&=&\varphi(s)\norm{\p^\alpha f(s)}_{L^2}^2-\varphi(t_j)~\norm{\p^\alpha
  f(t_j)}_{L^2}^2 \\
  &\leq &(S_1)+(S_2)+(S_3)+(S_4)+(S_5),
\end{eqnarray*}
where $(S_j)$, $1\leq j\leq 5,$ are given by
\[
(S_1)=\sup\limits_{t\in\Real}\abs{\frac{d\varphi}{dt}}\int_{\Omega_{\tilde\rho,j}}\norm{\p^{\alpha}
   f(t)}_{L^2}^2dt;
\]
\[
  (S_2)=C_2\abs\alpha^2\int_{\Omega_{\tilde\rho,j}}\norm{\nabla_v\p^{\alpha-1}
  f(t)}_{L^2_\gamma}^2dt;
\]
\[
  (S_3)=C_2
  \sup_{t\in\Omega_{\tilde\rho,j}}\norm{f(t)}_{L^2_\gamma}\cdot
  \set{\int_{\Omega_{\tilde\rho,j}}\com{G(f(t))}_{\alpha}^2dt}^{1\over2}
  \set{\int_{\Omega_{\tilde\rho,j}}
  \norm{\nabla_v\p^{\alpha-1}f(t)}_{L^2_\gamma}^2dt}^{1\over2};
\]
\begin{eqnarray*}
    (S_4)&=&C_2\sum\limits_{2\leq\abs\beta\leq\abs\alpha}
    C_{\alpha}^\beta
    \com{G(f)}_{\tilde\rho,~\beta-2}\set{\int_{\Omega_{\tilde\rho,j}}\norm{\nabla_v\p^{\alpha-\beta+1}f(t)}
    _{L^2_\gamma}^2dt}^{{1\over2}}\times{}\\
    &{}&\times\set{\int_{\Omega_{\tilde\rho,j}}\norm{\nabla_v\p^{\alpha-1}f(t)}_{L^2_\gamma}
    ^2dt}^{{1\over2}};
\end{eqnarray*}
and
\begin{eqnarray*}
  (S_5)&=&C_2\sum\limits_{1\leq\abs\beta\leq\abs\alpha}
  C_{\alpha}^\beta
  \com{G(f)}_{\tilde\rho, \alpha-\beta}\set{\int_{\Omega_{\tilde\rho,j}}\norm{\p^{\beta}f(t)}_{L^2_\gamma}^2dt}^{1\over2}
  \times{}\\
  &{}&\times\set{\int_{\Omega_{\tilde\rho,j}}\norm{\nabla_v\p^{\alpha-1}f(t)}_{L^2_\gamma}^2dt}^{1\over2}.
\end{eqnarray*}

In order to treat these terms, we need the following estimates which
can be deduced directly from the the induction hypothesis. The
validity of  \reff{+goal} for all $m\leq k-1$ implies that
\begin{equation}\label{ind1}
\set{\int_{\Omega_{\tilde\rho,j}}\norm{\nabla_v\p^{\gamma}f(t)}_{L^2_\gamma}^2dt}^{{1\over2}}
 \leq \frac{A^{\abs\gamma+1}}{\tilde\rho^{\abs\gamma}}\com{(\abs\gamma-2)!},\quad
 0\leq\abs\gamma\leq k-1 ;
\end{equation}
\begin{equation}\label{+ab}
\sup_{t\in\Omega_{\tilde\rho,j}}\norm{\p^{\lambda}f(t)}_{L^2}
 \leq \frac{A^{\abs\lambda+1}}{\tilde\rho^{\abs\lambda}}\inner{(\abs\lambda-2)!},\quad
 0\leq \abs\lambda\leq k-1;
\end{equation}
and
\begin{equation}\label{ind3}
\set{\int_{\Omega_{\tilde\rho,j}}\norm{\p^{\beta}f(t)}_{L^2_\gamma}^2dt}^{1/2}
 \leq
\frac{ A^{\abs\beta}}{
\tilde\rho^{\abs\beta-1}}\com{(\abs\beta-3)!},\quad
 1\leq\abs\beta\leq\abs\alpha,
\end{equation}
the last inequality following from the fact $\norm {\p^\beta
f}_{L^2_\gamma}\leq \norm {\nabla_v\p^{\beta-1} f}_{L^2_\gamma} $
for any multi-index $\beta$ with $1\leq\abs\beta\leq\abs\alpha.$
Consequently, if we take $A$ large enough such that $A\geq B,$ then
in view of \reff{++G}, \reff{+ab} and \reff{ind3}, one has
\begin{equation}\label{mfv1}
 \com{G(f)}_{\tilde\rho,\lambda}
 \leq \frac{2A^{\abs\lambda+1}}{\tilde\rho^{\abs\lambda}}\inner{(\abs\lambda-2)!},
 \quad 0\leq \abs\lambda\leq\abs\alpha-1,
\end{equation}
and
\begin{equation}\label{mfv3}
 \set{\int_{\Omega_{\tilde\rho,j}}\com{G(f(t))}_{\alpha}^2dt}^{1/2}
 \leq\frac{2A^{|\alpha|}}{\tilde\rho^{|\alpha|-1}}\com{(|\alpha|-3)!}.
\end{equation}

Now we are ready to handle the terms $(S_j)$ for $1\leq j\leq 5.$ In
the process below, the notations $C_\ell$, $\ell\geq 3$, will be
used to denote different constants which are larger than 1 and
depend only on $M_0, E_0, H_0, \gamma$ and $T_0.$ Observe
$\norm{\p^\alpha f(s)}_{L^2}\leq \norm{\p^\alpha
f(s)}_{L^2_\gamma},$ one has, by \reff{vpi} and \reff{ind3},
 \begin{equation}\label{S1}
 (S_1)\leq
 {C_3k\over\rho}\set{\frac{A^{\abs\alpha}}{\tilde\rho^{\abs\alpha-1}}\com{(\abs\alpha-3)!}}^2
 \leq
 C_4\set{\frac{A^{\abs\alpha}}{\tilde\rho^{\abs\alpha}}\com{(\abs\alpha-2)!}}^2,
\end{equation}
where we used the fact that
$\rho^{-1}<\tilde\rho^{-1}<\tilde\rho^{-2}$ and that $\abs\alpha=k.$
Next, by virtue of \reff{ind1}, we obtain
 \begin{equation}\label{S2}
 (S_2)\leq
  C_2\abs\alpha^2\set{\frac{A^{\abs\alpha}}{\tilde\rho^{\abs\alpha-1}}\com{(\abs\alpha-3)!}}^2
  \leq C_5\set{\frac{A^{\abs\alpha}}{\tilde\rho^{\abs\alpha-1}}\com{(\abs\alpha-2)!}}^2 .
\end{equation}

To estimate the term $(S_3),$ we use the estimates \reff{ac},
\reff{ind1} and \reff{mfv3}, which gives
\begin{equation}\label{S4}
  (S_3)\leq
   C_6 A
  \set{\frac{A^{\abs\alpha}}{\tilde\rho^{\abs\alpha-1}}\com{(\abs\alpha-3)!}}^2.
\end{equation}

Now we handle the term $(S_4).$ By virtue of \reff{ind1} and
\reff{mfv1}, we can deduce that  the term $(S_4)$ is bounded by
\begin{eqnarray*}
 &&\sum\limits_{2\leq\abs\beta\leq\abs\alpha}
 \frac{C_2 \abs\alpha
 !}{\abs\beta!(\abs{\alpha}-\abs\beta)!}\frac{A^{|\beta|-1}}{\tilde\rho^{|\beta|-2}}\com{(|\beta|-4)!}
 \frac{A^{\abs{\alpha}-|\beta|+2}}{\tilde\rho^{\abs{\alpha}-|\beta|+1}}\com{(\abs\alpha-|\beta|-1)!}\times{}\\
 &{}&\indent\times \frac{A^{|\alpha|}}{\tilde\rho^{|\alpha|-1}}\com{(|\alpha|-3)!},
\end{eqnarray*}
that is, from the estimate \reff{sum1} of Lemma \ref{lemma1}, we
have
\begin{eqnarray}\label{S3}
   (S_4)&\leq&
  C_{7}A\set{\frac{A^{\abs{\alpha}}}{\tilde\rho^{\abs{\alpha}-1}}\com{(\abs\alpha-2)!}}^2\times
  \set{\sum\limits_{2\leq\abs\beta\leq\abs\alpha-1}
 \frac{\abs\alpha}{\abs\beta^4(\abs{\alpha}-\abs\beta)}+1}\nonumber\\
 &\leq &C_{8}  A\set{
  \frac{A^{\abs{\alpha}}}{\tilde\rho^{\abs{\alpha}-1}}\com{(\abs\alpha-2)!}}^2.
\end{eqnarray}
Similarly, by virtue of \reff{sum2}, \reff{ind1}, \reff{ind3} and
\reff{mfv1}, we can get the estimate for the term $(S_5)$, i.e.
\begin{equation}\label{S5}
 (S_5)\leq C_{9} A \set{
  \frac{A^{\abs{\alpha}}}{\tilde\rho^{\abs{\alpha}-1}}\com{(\abs\alpha-2)!}}^2.
\end{equation}
Combining the estimates \reff{S1}-\reff{S5}, one has, for any
$s\in\Omega_{\rho,j}$, that
\begin{eqnarray}\label{15}
  \norm{\p^\alpha
  f(s)}_{L^2}^2&\leq& \sum\limits_{i=1}^5(S_i)
  \leq C_{10}A
  \set{\frac{A^{|\alpha|}}{\tilde\rho^{|\alpha|}}\com{(|\alpha|-2)!}}^2\nonumber\\
  &\leq & C_{11}A
  \set{\frac{A^{|\alpha|}}{\rho^{|\alpha|}}\com{(|\alpha|-2)!}}^2,
\end{eqnarray}
the last inequality above follows from the estimate \reff{fact}.
Taking $A$ large enough such that
\[
A\geq 4\max\set{\sup\limits_{s\geq T_0}\norm {f(s)}_{H^2_\gamma}+1,
\: B,\: C_{11}},
\]
then we obtain finally
\[\norm{\p^\alpha
  f(s)}_{L^2}^2\leq \set{{1\over2}\frac{A^{\abs\alpha+1}}{\rho^{\abs\alpha}}\com{(\abs\alpha-2)!}}^{2},
\quad  \forall\:s\in\Omega_{\rho,j},
\]
which gives the proof of the estimate \reff{first term}.

Now, it remains to prove the estimate \reff{second term}, which can
be handled similarly as the proof of the estimate \reff{first term}.
Let us apply Lemma \ref{crucial} again with $\mu=\tilde\alpha$, and
then we multiply the cut-off function $\varphi(t)$ in the both sides
of the estimate in Lemma \ref{crucial} to get
\begin{eqnarray*}
  &&\frac{d}{dt}\inner{\varphi(t)\norm{\p^{\tilde\alpha} f(t)}_{L^2}^2}+C_1\varphi(t)\norm{\nabla_v\p^{\tilde\alpha}  f(t)
  }_{L^2_\gamma}^2\\
  &&{}\leq \frac{d\varphi}{dt}\cdot\norm{\p^{\tilde\alpha} f(t)}_{L^2}^2+C_2\cdot\varphi(t)\abs{\tilde\alpha}^2
  \norm{\nabla_v\p^{{\tilde\alpha}-1} f(t)}_{L^2_\gamma}^2+{}\\
  &{}&+C_2\cdot\varphi(t)~\norm{f(t)}_{L^2_\gamma}\cdot\norm{\nabla_v\p^{{\tilde\alpha}-1}f(t)}_{L^2_\gamma}
  \cdot\com{G(f(t))}_{{{\tilde\alpha}}}+{}\\
  &{}&+C_2 \varphi(t)\sum\limits_{2\leq\abs\beta\leq\abs{\tilde\alpha}}
  C_{{\tilde\alpha} }^\beta \norm{\nabla_v\p^{{\tilde\alpha} -\beta+1} f(t)}_{L^2_\gamma}
  \norm{\nabla_v\p^{{\tilde\alpha}-1}
  f(t)}_{L^2_\gamma}\com{G(f(t))}_{\beta-2}+{}\\
  &{}&+C_2\cdot\varphi(t)\sum\limits_{1\leq\abs\beta\leq\abs{\tilde\alpha}}
   C_{{\tilde\alpha} }^\beta\norm{
  \p^\beta f(t)}_{L^2_\gamma}\cdot\norm{\nabla_v\p^{{\tilde\alpha}-1} f(t)}_{L^2_\gamma}
  \cdot\com{G(f(t))}_{{{\tilde\alpha}}-\beta}\\
  &&\stackrel{{\rm def}}{=}\mathcal {N}(t).
\end{eqnarray*}
Observe supp $\varphi\subset \Omega_{\tilde\rho,j},$ then
integrating the above inequality over the interval
$\Omega_{\tilde\rho,j}$ yields that
\[
  C_1\int_{\Omega_{\tilde\rho,j}}\varphi(s)\norm{\nabla_v\p^{\tilde\alpha}f(s)}_{L^2_{\gamma}}^2ds
  \leq \int_{\Omega_{\tilde\rho,j}}\mathcal {N}(t)dt.
\]
Repeating the previous arguments we used to estimate the terms
$(S_1)$-$(S_5)$, one has
\[
   \int_{\Omega_{\tilde\rho,j}}\mathcal {N}(t)dt \leq C_{11}A
  \set{\frac{A^{|\alpha|}}{\rho^{|\alpha|}}\com{(|\alpha|-2)!}}^2,
\]
where $C_{11}$ is the same constant as appeared in \reff{15}. From
these inequalities, together with the fact that $A\geq 4C_{11}$ and
\[
\int_{\Omega_{\rho,j}}\norm{\nabla_v\p^{\tilde\alpha}f(s)}_{L^2_{\gamma}}^2ds\leq
 \int_{\Omega_{\tilde\rho,j}}\varphi(s)\norm{\nabla_v\p^{\tilde\alpha}f(s)}_{L^2_{\gamma}}^2ds,
\]
we can deduce that the estimate \reff{second term} holds. The proof
of Proposition \ref{prp} is completed.

\end{proof}

Now we present the proof of the main result.

\begin{proof}[Proof of Theorem \ref{main result}]
Given $t_0>0$, and for any $t\geq t_0,$ we take
$T_0=\frac{3t_0}{4}$, $T_1=t+\frac{3t_0}{4}$ and
$\rho=\frac{t_0}{4}$ in the estimate \reff{goal}. This gives, for
any $\abs\alpha=m\geq 0,$
  \[
  \norm{\p^\alpha f(t)}_{L^2}\leq
  \sup_{s\in[T_0+\rho,T_1-\rho]}\norm{\p^\alpha f(s)}_{L^2}\leq
  \frac{2(t+1)(4A)^{m+1}}{t_0^m}\com{(m-2)!},
  \]
  with the constant $A$ depending only on $M_0, E_0, H_0, \gamma$ and
  $t_0$. Thus one has
  \[
  \frac{c_0^m}{m!}\sum\limits_{\abs\alpha=m}\norm{\p^\alpha f(t)}_{L^2}\leq
  16A(t+1)\inner{1\over2}^m,
  \]
where  $c_0=\frac{t_0}{16A}.$ Hence we have
  \[\norm{e^{c_0\inner{-\triangle_v}^{1\over2}}f(t,\cdot)}_{L^2(\Real^3)}\leq \tilde A(t+1),\]
  where the constant $\tilde A$ depends only on $M_0, E_0, H_0, \gamma$ and $t_0.$
  Theorem \ref{main result} is proved.
\end{proof}

\section{Proof of Lemma \ref{crucial}}
\label{section3}

This section is devoted to the proof of Lemma \ref{crucial}. Our
starting point is the following uniformly ellipticity property of
the matrix $(\bar a_{ij}),$ cf. Proposition 4 in \cite{desv-vill1}.

\begin{lem}
 There exists a constant $K$, depending only on $\gamma$ and $M_0, E_0, H_0$, such
 that
 \begin{equation}\label{ellipticity}
 \sum_{1\leq i,j\leq 3} \bar a_{ij}(t,v)\xi_i\xi_j\geq K(1+\abs
  v^2)^{\gamma/2}\abs\xi^2, \quad \forall~\xi\in\Real^3.
 \end{equation}
\end{lem}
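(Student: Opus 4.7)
The plan is to reduce the quadratic form to an integral with a transparent geometric meaning and then exploit a non-concentration property of $f$ that follows from the bounds on $M_0$, $E_0$, $H_0$. A direct calculation gives
\begin{equation*}
  a_{ij}(w)\xi_i\xi_j = \abs{w}^{\gamma}\bigl(\abs{w}^2\abs\xi^2-(w\cdot\xi)^2\bigr) = \abs{w}^{\gamma}\abs\xi^2\,\bigabs{\Pi_{\xi^{\perp}}w}^2,
\end{equation*}
where $\Pi_{\xi^{\perp}}$ denotes the orthogonal projection onto $\xi^{\perp}$. Substituting $w=v-\vstar$ and integrating against $f(t,\cdot)$ yields
\begin{equation*}
  \sum_{i,j=1}^{3}\bar a_{ij}(t,v)\xi_i\xi_j = \abs\xi^2\int_{\Real^3}f(t,\vstar)\,\abs{v-\vstar}^{\gamma}\,\bigabs{\Pi_{\xi^{\perp}}(v-\vstar)}^2\,d\vstar.
\end{equation*}
After normalizing $\abs\xi=1$, it suffices to bound the integral below by $K(1+\abs v^2)^{\gamma/2}$.

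The key input I would isolate is a standard non-concentration lemma: there exist $R,\eta,m_0>0$, depending only on $M_0,E_0,H_0$, such that for every measurable $A\subset\Real^3$ with Lebesgue measure $\abs A<\eta$ one has $\int_{\set{\abs{\vstar}\leq R}\setminus A}f(t,\vstar)\,d\vstar\geq m_0$ for all $t\geq 0$. This is obtained by combining the energy estimate $\int_{\abs{\vstar}>R}f\leq E_0/R^2$ (choose $R$ so that this is $\leq M_0/4$) with the equi-integrability of $\set{f(t,\cdot)}_{t\geq 0}$ coming from the entropy bound (Dunford-Pettis, using the superlinear growth of $s\mapsto s\log s$).

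With this in hand I split the estimate into two cases. When $\abs v\leq 2R$ I take $A$ to be the intersection of the ball $\set{\abs{\vstar}\leq R}$ with the $\delta$-cylinder around the line $\set{v+t\xi:t\in\Real}$; its Lebesgue measure is at most $C R\delta^2$, which is $<\eta$ once $\delta$ is small enough. On the complement one has $\bigabs{\Pi_{\xi^{\perp}}(v-\vstar)}\geq\delta$, hence $\abs{v-\vstar}\geq\delta$ as well, so the integrand is at least $\delta^{\gamma+2}$ and the integral is bounded below by $m_0\delta^{\gamma+2}$, which dominates $K(1+\abs v^2)^{\gamma/2}$ since $\abs v$ is bounded. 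When $\abs v>2R$, for any $\vstar$ with $\abs{\vstar}\leq R$ we have $\abs{v-\vstar}\geq\abs v/2$, so $\abs{v-\vstar}^{\gamma}\geq c(1+\abs v^2)^{\gamma/2}$; combining this with the same cylindrical exclusion gives a lower bound of the form $c\,m_0\delta^2(1+\abs v^2)^{\gamma/2}$. Taking $K$ to be the smaller of the two resulting constants finishes the argument.

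The main obstacle is the non-concentration lemma itself, where the entropy enters in an essential way; once it is available the cylindrical exclusion is purely geometric and easy to carry out. A secondary subtlety is to ensure that all thresholds ($R$, $\eta$, $\delta$) depend only on $M_0,E_0,H_0,\gamma$ and neither on $t$ nor on the particular direction $\xi$, which is immediate from the $t$-uniform bounds on mass, energy and entropy.
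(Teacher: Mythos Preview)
Your argument is correct and is essentially the classical proof of this ellipticity estimate. Note, however, that the paper does not give its own proof of this lemma at all: it simply quotes the result as Proposition~4 of Desvillettes--Villani \cite{desv-vill1} (with a remark that the same proof covers the Maxwellian case $\gamma=0$). What you have written is, in outline, precisely the Desvillettes--Villani argument: the identity $\sum_{i,j}a_{ij}(w)\xi_i\xi_j=\abs{w}^{\gamma}\abs\xi^{2}\bigabs{\Pi_{\xi^{\perp}}w}^{2}$, the non-concentration lemma obtained from the uniform bounds on mass, energy and entropy, and the cylindrical exclusion together with the small/large $\abs v$ dichotomy. So there is nothing to compare---you have supplied the proof the paper merely cited.
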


\begin{rmk}
Although the ellipticity of $(\bar a_{ij})$ was proved in
\cite{desv-vill1} in the hard potential case $\gamma>0,$   it's
still true for the Maxwellian case $\gamma=0$. This can be seen in
the proof of Proposition 4 in \cite{desv-vill1}.
\end{rmk}

\begin{lem}\label{analyticfunction}
There exists a constant $L,$ depending only on the space dimension,
such that for any fixed positive integer $N$, $N\geq2,$ one can find
a function $\psi_N\in C_0^\infty(\Real^3)$ with compact support in
$\set{v\in\Real^3\V \abs v\leq 2},$ satisfying that
$0\leq\psi_N(v)\leq 1$ and $\psi_N(v)=1$ on the ball
$\set{v\in\Real^3\V \abs v\leq 1}$, and
\begin{equation}\label{psi}
   \sup \abs {\p^{\lambda} \psi_N}\leq
   (LN)^{|\lambda|},\quad \forall~\lambda, \:\abs\lambda\leq N.
\end{equation}

\end{lem}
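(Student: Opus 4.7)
The plan is to construct $\psi_N$ as an $N$-fold mollification of the characteristic function of a slightly enlarged ball, with the mollifier scaled to width $\sim 1/N$ so that $N$ copies together expand the support by a bounded amount and, more importantly, provide enough ``slots'' to distribute derivatives one unit at a time whenever $|\lambda|\leq N$. Concretely, fix once and for all a nonnegative radial $\rho\in C_0^\infty(\Real^3)$ with support in $\{|v|\leq 1\}$ and $\int\rho\,dv=1$, and let $\rho_\delta(v)=\delta^{-3}\rho(v/\delta)$. Taking $\delta=1/(4N)$ and letting $\chi$ be the characteristic function of $\{|v|\leq 3/2\}$, I would define
\[
  \psi_N=\chi*\underbrace{\rho_\delta*\rho_\delta*\cdots*\rho_\delta}_{N\text{ copies}}.
\]

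The qualitative properties are immediate. Since each $\rho_\delta$ is a probability density and $0\leq\chi\leq 1$, convolution preserves the range $[0,1]$, so $0\leq\psi_N\leq 1$. Each convolution with $\rho_\delta$ enlarges the support of the previous function by at most $\delta$ and shrinks its $\{\equiv 1\}$-region by at most $\delta$; after $N$ steps the support lies in $\{|v|\leq 3/2+N\delta\}=\{|v|\leq 7/4\}\subset\{|v|\leq 2\}$ and $\psi_N\equiv 1$ on $\{|v|\leq 3/2-N\delta\}=\{|v|\leq 5/4\}\supset\{|v|\leq 1\}$.

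The derivative bound is the substantive point. For any $\lambda$ with $|\lambda|\leq N$, I decompose $\lambda=\lambda^{(1)}+\cdots+\lambda^{(N)}$ with $|\lambda^{(i)}|\leq 1$ for every $i$, which is possible precisely because $|\lambda|\leq N$. Then
\[
  \p^\lambda\psi_N=\chi*\p^{\lambda^{(1)}}\rho_\delta*\cdots*\p^{\lambda^{(N)}}\rho_\delta,
\]
and iterated Young's inequality gives
\[
  \sup|\p^\lambda\psi_N|\leq\|\chi\|_{L^\infty}\prod_{i=1}^N\|\p^{\lambda^{(i)}}\rho_\delta\|_{L^1}.
\]
Each factor with $\lambda^{(i)}=0$ equals $1$; each of the $|\lambda|$ factors with $|\lambda^{(i)}|=1$ is bounded by $\delta^{-1}\max_j\|\p_j\rho\|_{L^1}$. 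Writing $C_\rho=\max_j\|\p_j\rho\|_{L^1}$, the product is at most $(\delta^{-1}C_\rho)^{|\lambda|}=(4C_\rho N)^{|\lambda|}$, which is the required estimate with $L=4C_\rho$, depending only on the dimension through the fixed reference mollifier $\rho$.

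The only real obstacle is the need to spread the derivatives across the $N$ convolution factors: placing all of $\lambda$ on a single mollifier would produce a factor $(CN)^{|\lambda|}|\lambda|!$, which is merely of Gevrey type. The matching between the exponent $N$ in the statement and the number of mollifiers in the convolution is exactly what allows this distribution for every $|\lambda|\leq N$, and the constant $L$ is determined once the reference $\rho$ is fixed.
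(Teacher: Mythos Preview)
Your construction is correct and is essentially the same as the paper's: both take $\psi_N=\chi*\rho_\delta^{*N}$ with $\chi=\mathbf 1_{\{|v|\leq 3/2\}}$ and a fixed mollifier scaled to width $\sim 1/N$, then obtain the derivative bound by distributing the $|\lambda|\leq N$ derivatives one apiece among the $N$ convolution factors and applying Young's inequality. The only cosmetic differences are your choice of $\delta=1/(4N)$ with $\mathrm{supp}\,\rho\subset\{|v|\leq 1\}$ versus the paper's $\delta=1/N$ with $\mathrm{supp}\,\rho\subset\{|v|\leq 1/2\}$, and you spell out the distribution-of-derivatives step where the paper simply writes ``direct verification.''
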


\begin{proof}
For the construction of $\psi_N$,  we refer to \cite{Hormander,Ro}.
Choose a non-negative function $\rho\in C_0^\infty(\Real^3)$ with
compact support in $\{v\in\Real^3\V \abs v\leq {1\over2}\},$ and
$\int_{\Real^3}\rho(v)dv=1.$ Set
\[L=\max_{\abs{\alpha}\leq 1} \int_{\Real^3}\abs{\p^\alpha \rho(v)}dv.\]
Let $\chi$ be the characteristic function of the set
$\set{v\in\Real^3\V \abs v\leq {3\over2}}.$ For each $r\geq0,$
defined
\[\rho_r(v)=r^{-3}\rho(v/r).\]
Set then
\[\psi_N=\chi*\rho_{1/N}*\cdots*\rho_{1/N}\]
with $N$ factors $\rho_{1/N}$. Direct verification shows that the
function $\psi_N$ satisfies the desired properties.
\end{proof}

\begin{lem}\label{a}

There exists a constant $B$, depending only on the dimension,  such
that  for all multi-indices $\beta$ with $|\beta|\geq2$ and all
$g,h\in L^2_\gamma(\Real^3),$ one has
\begin{eqnarray*}
&\sum_{1\leq i,j\leq 3}\int_{\Real^3}(\p^{\beta}\bar
a_{ij}(t,v))g(v)h(v)dv \leq
 C\norm{g}_{L^2_{\gamma}}\norm{h}_{L^2_{\gamma}}\com{G(f(t))}_{\beta-2},
 \quad \forall\:t>0,
\end{eqnarray*}
where $\com{G(f(t))}_{\beta-2}=\set{
 \norm{\p^{\beta-2}f(t)}_{L^2}+B^{\abs{\beta}-2}(|\beta|-5)!}.$
\end{lem}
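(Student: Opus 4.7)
The plan is to isolate the singular behavior of $a_{ij}$ at the origin from its analytic behavior away from the origin using the cutoff $\psi_N$ from Lemma~\ref{analyticfunction} with $N=|\beta|$. Write $a_{ij}=a_{ij}\psi_N+a_{ij}(1-\psi_N)$, which induces the splitting
\[
\p^\beta\bar a_{ij}(v)=I_1(v)+I_2(v),
\]
where $I_1=\p^\beta[(a_{ij}\psi_N)*f]$ captures the singular part and $I_2=\p^\beta[(a_{ij}(1-\psi_N))*f]$ the smooth part. One then bounds $\sum_{ij}\int I_k\, gh\,dv$ separately for $k=1,2$.

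For $I_1$, since $a_{ij}$ vanishes to order $\gamma+2\geq 2$ at the origin, I would use $\p_v g(v-v_*)=-\p_{v_*}g(v-v_*)$ and integrate by parts $|\beta|-2$ times in $v_*$ to transfer derivatives onto $f$:
\[
I_1(v)=\int_{\Real^3}\p^\lambda(a_{ij}\psi_N)(v-v_*)\,\p_{v_*}^{\beta-\lambda}f(v_*)\,dv_*,\qquad|\lambda|=2.
\]
A Leibniz expansion of $\p^\lambda(a_{ij}\psi_N)$, together with the pointwise bounds $|\p^\mu a_{ij}(w)|\leq C|w|^{\gamma+2-|\mu|}$ for $|\mu|\leq 2$ and $\sup|\p^\nu\psi_N|\leq (LN)^{|\nu|}$, gives an $L^\infty$ bound on the kernel, supported in $|w|\leq 2$. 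Pairing with $g(v)h(v)$ via Young's inequality, and using $(1+|v|^2)\asymp(1+|v_*|^2)$ on the bounded support of the kernel to freely move the weight $(1+|v|^2)^{\gamma/2}$ between the two variables, one obtains
\[
\biggl|\sum_{ij}\int I_1(v)g(v)h(v)\,dv\biggr|\leq C\norm{\p^{\beta-2}f(t)}_{L^2}\norm{g}_{L^2_\gamma}\norm{h}_{L^2_\gamma}.
\]

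For $I_2$, the function $a_{ij}(1-\psi_N)$ is $C^\infty$ on $\Real^3$ and real-analytic on $\{|w|\geq 2\}$, so derivatives can be taken directly under the integral:
\[
I_2(v)=\int_{\Real^3}\p^\beta[a_{ij}(1-\psi_N)](v-v_*)f(v_*)\,dv_*.
\]
On the transition annulus $\{1\leq|w|\leq 2\}$, a Leibniz expansion with Cauchy estimates for $\p^\mu a_{ij}$ at radius $|w|/2$ combined with the bounds $\sup|\p^\nu\psi_N|\leq(LN)^{|\nu|}$, valid for $|\nu|\leq N=|\beta|$, gives a controlled (dimension-dependent) bound. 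On $\{|w|\geq 2\}$ one has $(1-\psi_N)\equiv 1$ and the Cauchy estimate yields $|\p^\beta a_{ij}(w)|\leq C_0^{|\beta|}|\beta|!\,|w|^{\gamma+2-|\beta|}$, so integrating the decaying kernel against $f$ (using the mass, energy, and entropy bounds and, for $t>0$, the uniform $L^\infty$-control on $f$ coming from the assumption \reff{uniform}) turns the apparent $|\beta|!$ growth into $B^{|\beta|-2}(|\beta|-5)!$ times a geometric constant. Pairing with $gh$ using $\norm{f}_{L^2}\leq C$ and Cauchy--Schwarz in $L^2_\gamma$ gives
\[
\biggl|\sum_{ij}\int I_2(v)g(v)h(v)\,dv\biggr|\leq C B^{|\beta|-2}(|\beta|-5)!\,\norm{g}_{L^2_\gamma}\norm{h}_{L^2_\gamma}.
\]
Adding the two estimates yields the desired inequality with $[G(f(t))]_{\beta-2}=\norm{\p^{\beta-2}f(t)}_{L^2}+B^{|\beta|-2}(|\beta|-5)!$.

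The main technical obstacle is the sharp factorial reduction from $|\beta|!$ down to $(|\beta|-5)!$ in the treatment of $I_2$: the apparent $|\beta|!$ from Cauchy estimates on the non-polynomial function $a_{ij}$ must be tamed by exploiting (i) the combinatorial cancellations in the Leibniz expansion involving the $N=|\beta|$-dependent cutoff $\psi_N$, (ii) the three-dimensional integration gain from the decaying kernel $|w|^{\gamma+2-|\beta|}$ against $f$, and (iii) the weighted pairing that allows moments of $f$ to enter. The delicate balance of these ingredients, and the absorption of all dimension-dependent constants into the universal constant $B$ supplied by Lemma~\ref{analyticfunction}, is the technical heart of the proof.
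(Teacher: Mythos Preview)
Your decomposition $a_{ij}=\psi_N a_{ij}+(1-\psi_N)a_{ij}$ with $N=|\beta|$ and your treatment of $I_1$ (shift two derivatives onto the kernel, the remaining $|\beta|-2$ onto $f$, then use the compact support and $L^\infty$ bound of the kernel) match the paper's proof essentially line for line. The outline for $I_2$ (Leibniz on the annulus $1\leq|w|\leq 2$ using \eqref{psi}, direct Cauchy-type estimate on $\{|w|\geq 1\}$) is also the paper's route.

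Where you go astray is in your description of the ``main technical obstacle.'' The reduction from $|\beta|!$ to $(|\beta|-5)!$ is \emph{not} delicate: it is the one-line observation that $|\beta|!/(|\beta|-5)!$ is a polynomial of degree five in $|\beta|$, hence $|\beta|!\leq C^{|\beta|}(|\beta|-5)!$ for a suitable constant (the paper simply writes $|\beta|^{|\beta|}\leq e^{|\beta|}|\beta|!\leq 30^{|\beta|}(|\beta|-5)!$). None of the three mechanisms you list are needed. In particular the paper does \emph{not} exploit the decay $|w|^{\gamma+2-|\beta|}$; it uses the cruder bound $|\p^\beta a_{ij}(w)|\leq \tilde C^{|\beta|}|\beta|!\,(1+|v|^\gamma+|v_*|^\gamma)$ for $|w|\geq 1$, pairs this against $\norm{f(t)}_{L^1_\gamma}\leq M_0+2E_0$ (mass and energy only, no entropy), and absorbs the remaining weight $(1+|v|^2)^{\gamma/2}$ into $\norm{g}_{L^2_\gamma}\norm{h}_{L^2_\gamma}$ by Cauchy--Schwarz. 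Your appeal to $L^\infty$ control on $f$ via \eqref{uniform} is both unnecessary and undesirable: it would make the constant $C$ depend on a lower time cutoff $t_0$, whereas the lemma is asserted uniformly for all $t>0$ with $C$ depending only on $M_0,E_0,H_0,\gamma$.
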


\begin{proof}
Let $L$ be the constant given in Lemman \ref{analyticfunction}, and
let $\psi=\psi_{\abs\beta}\in C_0^\infty(\Real^3)$ be the function
constructed in Lemma \ref{analyticfunction} for $N=\abs\beta.$
 Write
$a_{ij}=\psi a_{ij}+(1-\psi) a_{ij}.$ Then $\bar a_{ij}=(\psi
a_{ij})*f+[(1-\psi) a_{ij}]*f$, and hence
\[\p^{\beta}\bar
a_{ij}=\com{\p^{\tilde\beta}(\psi
a_{ij})}*(\p^{\beta-\tilde\beta}f)+\set{\p^{\beta}\com{(1-\psi)
a_{ij}}}*f,
\]
where $\tilde\beta$ is an arbitrary multi-index satisfying
$\tilde\beta\leq\beta$ and $|\tilde\beta|=2.$

We first treat the term $\com{\p^{\tilde\beta}(\psi
a_{ij})}*(\p^{\beta-\tilde\beta}f)$. It is easy to verify that for
all $\tilde\beta$ with $\bigabs{\tilde\beta}=2,$
\[\bigabs{(\p^{\tilde\beta}a_{ij})(v-\vstar)}\leq C\abs{v-\vstar}^\gamma,\]
thus we can compute
\begin{eqnarray*}
  \abs{\com{\p^{\tilde\beta}(\psi
  a_{ij})}*(\p^{\beta-\tilde\beta}f)(v)}&=&\abs{\int_{\Real^3}
  \com{\p^{\tilde\beta}(\psi
  a_{ij})}(v-\vstar)\cdot(\p^{\beta-\tilde\beta}f)(\vstar)d\vstar}\\
  &\leq &C\int_{\set{\abs{\vstar-v}\leq2}}
  \abs{(\p^{\beta-\tilde\beta}f)(\vstar)}d\vstar\\
  &\leq &C\norm{\p^{\beta-2}f(t)}_{L^2}.
\end{eqnarray*}
Here the notation $C$ is used to denote different constants which
will depend only on the $\gamma$, $M_0$, $E_0$ and $H_0$.

In the next step we treat the term $\set{\p^{\beta}\com{(1-\psi)
a_{ij}}}*f$. By using Leibniz's formula, one has
\begin{eqnarray*}
  &&\abs{\biginner{\p^{\beta}\com{(1-\psi) a_{ij}}}*f(v)}\\&&=
  \big|\sum\limits_{0\leq\abs{\lambda}\leq\abs{\beta}}C_\beta^\lambda\int_{\Real^3}
  \com{\p^{\beta-\lambda}(1-\psi)}(v-\vstar)\cdot\inner{\p^{\lambda}
  a_{ij}}(v-\vstar)\cdot f(t,\vstar)d\vstar\big|\\
  &&\leq J_1+J_2,
\end{eqnarray*}
where
\[
  J_1=\sum\limits_{0\leq\abs{\lambda}\leq\abs{\beta}-1}C_\beta^\lambda
  \int_{\set{1\leq\abs{\vstar-v}\leq2}}
  \abs{\p^{\beta-\lambda}\psi(v-\vstar)}\cdot\abs{\p^{\lambda}
  a_{ij}(v-\vstar)}f(t,\vstar)d\vstar,
\]
and
\[
J_2=\int_{\set{\abs{\vstar-v}\geq1}}
  \abs{1-\psi(v-\vstar)}\cdot\abs{\p^{\beta}
  a_{ij}(v-\vstar)}\cdot f(t,\vstar)d\vstar .
\]
In view of \reff{coe}, we can find a constant $\tilde C,$ such that
for all multi-indices $\lambda\leq\beta,$
\[
\abs{\inner{\p^{\lambda} a_{ij}}(v-\vstar)}\leq \tilde
C^{\abs\lambda}\abs\lambda!, \quad {\rm
for}~1\leq\abs{\vstar-v}\leq2,
\]
which along with the estimate \reff{psi} gives
\begin{eqnarray*}
 J_1&\leq& {L}^{\abs\beta}(\abs\beta)^{\abs\beta}\cdot\norm{f(t)}_{L^1}
 \sum\limits_{0\leq\abs{\lambda}\leq\abs{\beta}-1}\inner{\frac{\tilde
 C}{L}}^{\abs\lambda}\\
 &\leq& 30^{\abs\beta}{L}^{\abs\beta}(\abs\beta-5)!\norm{f(t)}_{L^1}
 \sum\limits_{0\leq\abs{\lambda}\leq\abs{\beta}-1}\inner{\frac{\tilde
 C}{L}}^{\abs\lambda},
\end{eqnarray*}
the last estimate follows from the fact that
\[
\abs\beta^{\abs\beta}\leq e^{\abs\beta}\abs\beta!\leq
30^{\abs\beta}(\abs\beta-5)!.
\]
Furthermore, it is easy to see that, for all $\beta$ with
$\abs\beta\geq2,$
\[
\abs{\inner{\p^{\beta} a_{ij}}(v-\vstar)}\leq \tilde
C^{\abs\beta}\abs\beta!(1+\abs\vstar^\gamma+\abs v^\gamma),\quad
{\rm for\:\:}\abs{\vstar-v}\geq1.
\]
Hence
 \[ J_2\leq2\tilde
C^{\abs\beta}\abs\beta!\cdot\norm{f(t)}_{L_\gamma^1}(1+\abs
v^\gamma)\leq30^{\abs\beta}\tilde
C^{\abs\beta}(\abs\beta-5)!\cdot\norm{f(t)}_{L_\gamma^1}(1+\abs
v^\gamma).
 \]
So we may let $L\geq 2\tilde C$ and then take $B$ large enough such
that $B\geq 30L$, thus it follows from the estimates above that
\begin{eqnarray*}
  J_1+J_2\leq (50L)^2\norm{f(t)}_{L_\gamma^1}
  B^{\abs\beta-2}(\abs\beta-5)!(1+\abs v^2)^{\gamma/2}.
\end{eqnarray*}
This along with the  fact $\norm{f(t)}_{L_\gamma^1}\leq M_0+2E_0$
gives that
\begin{eqnarray*}
 \abs{\biginner{\p_v^{\beta}\com{(1-\psi) a_{ij}}}*f(v)}\leq J_1+J_2\leq
 C B^{\abs\beta-2}(\abs\beta-5)!(1+\abs v^2)^{\gamma/2}.
\end{eqnarray*}
Combining the estimate on the term $\com{\p^{\tilde\beta}(\psi
a_{ij})}*(\p^{\beta-\tilde\beta}f)$ yields that
\begin{eqnarray*}
  \abs{\p^{\beta}\bar a_{ij}(v)}&\leq& C
  \set{\norm{\p^{{\beta}-2}f(t)}_{L^2}
  +B^{\abs{\beta}-2}\com{(|\beta|-5)!}\cdot(1+\abs v^2)^{\gamma/2}}\\
  &\leq& C \com{G(f(t))}_{{\beta}-2}\cdot
  (1+\abs v^2)^{\gamma/2}.
\end{eqnarray*}
By using the Cauchy's inequality and the estimates above, one can
deduce the desired inequality in Lemma \ref{a}.
\end{proof}

Similarly,  we have following estimate

\begin{lem}\label{c}
For all multi-indices $\beta$ with $|\beta|\geq0$ and all $g,h\in
L^2_\gamma(\Real^3),$  one has
\begin{eqnarray*}
&\int_{\Real^3}(\p^{\beta}\bar c(t,v))g(v)h(v)dv \leq
 C\norm{g}_{L^2_\gamma}\norm{h}_{L^2_\gamma}
 \cdot\com{G(f(t))}_{\beta}, \quad
\forall\: t\geq0.
\end{eqnarray*}
\end{lem}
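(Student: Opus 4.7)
The plan is to mirror closely the proof of Lemma \ref{a}, adapting it to the less regular symbol $c(v)=-2(\gamma+3)\abs{v}^\gamma$. Two structural differences must be absorbed: the target bound carries $\norm{\p^\beta f}_{L^2}$ (not $\norm{\p^{\beta-2}f}_{L^2}$), so in the near-origin piece I would keep all $\beta$-derivatives on $f$; and the range $\abs\beta\geq 0$ forces the case $\abs\beta=0$ to be handled directly, since the cutoff from Lemma \ref{analyticfunction} is only available for $N\geq 2$.

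The case $\abs\beta=0$ is immediate. Using $\abs{v-\vstar}^\gamma\leq C(1+\abs{v}^\gamma+\abs{\vstar}^\gamma)$ together with $\norm{f(t)}_{L^1_\gamma}\leq M_0+2E_0$ yields $\abs{\bar c(t,v)}\leq C(1+\abs{v}^2)^{\gamma/2}$, and since $\com{G(f(t))}_0\geq 1$, a direct Cauchy--Schwarz produces the asserted estimate.

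For $\abs\beta\geq 1$, I would apply Lemma \ref{analyticfunction} with $N=\max\set{\abs\beta,2}$, call $\psi=\psi_N$, and start from the elementary identity $\p^\beta\bar c=c*(\p^\beta f)$. Splitting $c=\psi c+(1-\psi)c$ and integrating by parts on the outer piece gives
\[
\p^\beta\bar c \;=\; (\psi c)*(\p^\beta f) \;+\; \bigset{\p^\beta[(1-\psi)c]}*f.
\]
The first summand is handled at once: $\psi c$ is bounded (because $\gamma\geq 0$) and supported in $\set{\abs{v}\leq 2}$, so Cauchy--Schwarz on a fixed ball gives $\abs{(\psi c)*(\p^\beta f)(v)}\leq C\norm{\p^\beta f(t)}_{L^2}$. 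For the second summand I would expand by Leibniz and split, as in Lemma \ref{a}, into a term $J_1$ (at least one derivative on $\psi$, support in the annulus $\set{1\leq\abs{v-\vstar}\leq 2}$) and a term $J_2$ (all derivatives on $c$, support in $\set{\abs{v-\vstar}\geq 1}$). The two pointwise ingredients
\[
\abs{\p^\lambda c(v)}\leq \tilde C^{\abs\lambda}\abs\lambda!,\quad 1\leq\abs{v}\leq 2,
\]
and
\[
\abs{\p^\beta c(v)}\leq \tilde C^{\abs\beta}\abs\beta!\inner{1+\abs{v}^\gamma},\quad \abs{v}\geq 1,
\]
follow directly from differentiating $\abs{v}^\gamma$ and are uniform in $\gamma\in[0,1]$.

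Combining these with $\abs{\p^\lambda\psi}\leq (LN)^{\abs\lambda}$, the factorial bound $\abs\beta^{\abs\beta}\leq C^{\abs\beta}(\abs\beta-3)!$, and $\norm{f(t)}_{L^1_\gamma}\leq M_0+2E_0$, and then enlarging $L$ and $B$ as in Lemma \ref{a} (it suffices to take $L\geq 2\tilde C$ and $B\geq 30L$, both depending only on the dimension), one obtains
\[
J_1+J_2\leq CB^{\abs\beta}\inner{\abs\beta-3}!\inner{1+\abs{v}^2}^{\gamma/2}.
\]
Together with the inner-part bound this gives $\abs{\p^\beta\bar c(v)}\leq C\com{G(f(t))}_\beta(1+\abs{v}^2)^{\gamma/2}$, and a final Cauchy--Schwarz produces the stated integral inequality. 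The only technical obstacle, already present in Lemma \ref{a}, is the combinatorial bookkeeping in $J_1$: the Leibniz sum, the binomials $C_\beta^\lambda$, and the geometric factor $(\tilde C/L)^{\abs\lambda}$ must conspire to leave exactly a loss of $(\abs\beta-3)!$ (rather than $\abs\beta!$), with the base $B$ independent of $\gamma$ and of the initial data, as the statement demands.
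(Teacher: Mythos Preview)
Your proposal is correct and is precisely the natural filling-in of the paper's ``Similarly'': the paper gives no explicit proof of Lemma~\ref{c}, and the argument you outline---putting all $\beta$-derivatives on $f$ in the near-origin piece (since $c(v)=-2(\gamma+3)\abs v^\gamma$ is already bounded there for $\gamma\geq 0$), handling $\abs\beta=0$ directly, and using $N=\max\set{\abs\beta,2}$ to accommodate $\abs\beta=1$---is exactly the intended adaptation of the proof of Lemma~\ref{a}. The combinatorial bookkeeping in $J_1$ goes through verbatim with $(\abs\beta-3)!$ in place of $(\abs\beta-5)!$, and the constants $L,\tilde C,B$ can indeed be chosen depending only on the dimension since $\gamma\in[0,1]$.
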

\bigskip

Next, we give the proof of Lemma \ref{crucial}.

\begin{proof}[Proof of Lemma \ref{crucial}]Set $b_j=\sum_{1\leq i\leq 3}\p_{v_i} a_{ij}(v)=-2\abs v^\gamma v_j$, thus we have
 $\sum_{1\leq i\leq 3}\p_{v_i}\bar a_{ij}(v)=\bar b_j(v)$, $
\sum_{1\leq j\leq 3}\p_{v_j}\bar b_j=\bar c$. Since the solution $f$
satisfies
$$
\p_tf=\sum_{1\leq i,j\leq 3}\bar a_{ij}\p_{v_ivj}f-\bar c f,
$$
thus we have
\begin{eqnarray*}
 \frac{d}{dt}\norm{\p^\mu f(t)}_{L^2}^2&=&2\int_{\Real^3}\com{\p_t
 \p^\mu f(t,v)}\cdot\com{\p^\mu  f(t,v)}dv\\&=&2\sum_{1\leq i,j\leq 3}\int_{\Real^3}\com{
 \p^\mu  \biginner{ \bar a_{ij}\p_{v_iv_j}f-\bar c f}}\cdot\com{\p^\mu  f(t,v)}
 dv\\
 &=&2\sum_{1\leq i,j\leq 3}\int_{\Real^3}
  \bar a_{ij}\inner{\p_{v_iv_j}\p^\mu  f}\cdot\inner{\p^\mu  f } dv+{}\\
  &{}&+2\sum_{1\leq i,j\leq 3}\sum\limits_{\abs\beta=1}
  C_{\mu }^\beta\int_{\Real^3}\inner{\p^\beta\bar a_{ij}}\inner{\p_{v_iv_j}\p^{\mu -\beta} f}
  \cdot\inner{\p^\mu f } dv+{}\\
  &{}&+2\sum_{1\leq i,j\leq 3}\sum_{\abs\beta=2}^{\abs\mu}
  C_{\mu }^\beta\int_{\Real^3}\inner{\p^\beta\bar a_{ij}}\inner{\p_{v_iv_j}\p^{\mu -\beta} f}
  \cdot\inner{\p^\mu f } dv-{}\\
  &{}&-2\sum\limits_{0\leq\abs\beta\leq\abs\mu }
  C_{\mu }^\beta\int_{\Real^3}\inner{\p^{\mu-\beta}\bar c}\inner{
  \p^{\beta} f}\cdot\inner{\p^\mu  f }dv\\
  &=&(I)+(II)+(III)+(IV).
\end{eqnarray*}
We shall estimate the each term above by following steps.

\smallskip
 {\bf Step 1. Upper bound for the term $(I).$}

\smallskip
Integrating by parts, one has
\begin{eqnarray*}
  (I)&=&-2\sum_{1\leq i,j\leq 3}\int_{\Real^3}
  \bar a_{ij}\inner{\p_{v_j}\p^\mu f}\cdot\inner{\p_{v_i}\p^\mu f } dv-2\sum_{1\leq j\leq 3}\int_{\Real^3}
  \bar b_{j}\inner{\p_{v_j}\p^\mu f}\cdot\inner{\p^\mu f } dv\\
  &=&(I)_1+(I)_2.
\end{eqnarray*}
The ellipticity property \reff{ellipticity} of $(\bar a_{ij})_{i,j}$
gives that
\begin{eqnarray*}
  (I)_1\leq -2K\int_{\Real^3}
  \abs{\nabla_v\p^\mu f}^2(1+\abs v^2)^{\gamma/2}dv
  = -2K\norm{\nabla_v\p^\mu f(t)}_{L^2_\gamma}^2.
\end{eqnarray*}
For the term $(I)_2,$ we integrate by parts to get
\begin{eqnarray*}
  (I)_2=-(I)_2+2\int_{\Real^3}
  \bar c\inner{\p^\mu f}\cdot\inner{\p^\mu f } dv.
\end{eqnarray*}
This along with the fact
  \[\abs{\bar c(v)}\leq C\norm{f(t)}_{L^1_\gamma}
  (1+\abs v^2)^{\gamma/2}\leq C
  (1+\abs v^2)^{\gamma/2}
\]
 shows immediately
\[
  (I)_2\leq C\norm{\p^\mu f(t)}_{L^2_\gamma}^2
  \leq C\norm{\nabla_v\p^{\mu-1} f(t)}_{L^2_\gamma}^2.
\]
Combining these estimates, we get the upper bound for the term
$(I)$, i.e.
\begin{equation}\label{I}
  (I)\leq -2K\norm{\nabla_v\p^\mu f(t)}_{L^2_{\gamma}}^2+C\norm{\nabla_v\p^{\mu-1} f(t)}_{L^2_{\gamma}}^2.
\end{equation}

\smallskip
{\bf Step 2. Upper bound for the term $(II).$}
\smallskip

Recall $(II)=2\sum_{1\leq i,j\leq 3}\sum\limits_{\abs\beta=1} C_{\mu
}^\beta\int_{\Real^3}\inner{\p^\beta\bar
a_{ij}}\inner{\p_{v_iv_j}\p^{\mu -\beta} f} \cdot\inner{\p^\mu f
}dv.$ Integrating by parts, we get
\begin{eqnarray*}
 (II)&=&-2\sum_{1\leq j\leq 3}\sum\limits_{\abs\beta=1}
  C_{\mu}^\beta\int_{\Real^3}\inner{\p^\beta\bar b_{j}}\inner{\p_{v_j}\p^{\mu-\beta} f}
  \cdot\inner{\p^\mu f } dv-{}\\
  &{}&-2\sum_{1\leq i,j\leq 3}\sum\limits_{\abs\beta=1}
  C_{\mu}^\beta\int_{\Real^3}\inner{\p^\beta\bar a_{ij}}\inner{\p_{v_j}\p^{\mu-\beta} f}
  \cdot\inner{\p_{v_i}\p^\mu f } dv\\
  &=&(II)_1+(II)_2.
\end{eqnarray*}
Note $\abs{\p^\beta\bar b_{j}(t,v)}\leq C(1+\abs v^2)^{\gamma/2}$
for any $\beta$ with $\abs\beta=1$ and hence
\begin{eqnarray*}
 (II)_1\leq C\abs\mu\cdot\norm{\nabla_v\p^{\mu-\beta}f(t)}_{L^2_\gamma}
 \norm{\p^{\mu}f(t)}_{L^2_\gamma}
 \leq
 C\abs\mu\cdot\norm{\nabla_v\p^{\mu-1}f(t)}_{L^2_\gamma}^2.
\end{eqnarray*}
For the term $(II)_2$, since $\mu=\beta+(\mu-\beta),$ it can be
rewritten as following form
\begin{eqnarray*}
 (II)_2&=-2\sum_{1\leq i,j\leq 3}\sum\limits_{\abs\beta=1}
  C_{\mu}^\beta\int_{\Real^3}\inner{\p^\beta\bar a_{ij}}\inner{\p_{v_j}\p^{\mu-\beta} f}
  \cdot\inner{\p^\beta\p_{v_i}\p^{\mu-\beta} f } dv.
\end{eqnarray*}
Since $\abs \beta=1,$ we can integrate by parts to get
\begin{eqnarray*}
  (II)_2&=&2\sum_{1\leq i,j\leq 3}\sum\limits_{\abs\beta=1}
  C_{\mu}^\beta\int_{\Real^3}\inner{\p^{\beta}\bar a_{ij}}\inner{\p_{v_j}\p^{\mu} f}
  \cdot\inner{\p_{v_i}\p^{\mu-\beta} f } dv+{}\\
  &{}&+2\sum_{1\leq i,j\leq 3}\sum\limits_{\abs\beta=1}
  C_{\mu}^\beta\int_{\Real^3}\inner{\p^{\beta+\beta}\bar a_{ij}}\inner{\p_{v_j}\p^{\mu-\beta} f}
  \cdot\inner{\p_{v_i}\p^{\mu-\beta} f } dv\\
  &=&-(II)_2+2\sum_{1\leq i,j\leq 3}\sum\limits_{\abs\beta=1}
  C_{\mu}^\beta\int_{\Real^3}\inner{\p^{\beta+\beta}\bar a_{ij}}\inner{\p_{v_j}\p^{\mu-\beta} f}
  \cdot\inner{\p_{v_i}\p^{\mu-\beta} f } dv.
\end{eqnarray*}
Hence
\begin{eqnarray*}
  (II)_2&=\sum_{1\leq i,j\leq 3}\sum\limits_{\abs\beta=1}
  C_{\mu}^\beta\int_{\Real^3}\inner{\p^{\beta+\beta}\bar a_{ij}}\inner{\p_{v_j}\p^{\mu-\beta} f}
  \cdot\inner{\p_{v_i}\p^{\mu-\beta} f } dv.
\end{eqnarray*}
This along with the fact $\abs{\p^{\beta+\beta}\bar a_{ij}(v)}\leq C
(1+\abs v^2)^{\gamma/2}$ for all $\beta$ with $\abs\beta=1$ gives
that
\begin{eqnarray*}
 (II)_2\leq C\sum\limits_{\abs\beta=1}
  C_{\mu}^\beta\cdot\norm{\nabla_v\p^{\mu-\beta}f}_{L^2_{\gamma}}^2
  \leq C\abs\mu\cdot\norm{\nabla_v\p^{\mu-1}f}_{L^2_{\gamma}}^2.
\end{eqnarray*}
Thus we obtain
\begin{equation}\label{II}
 (II)\leq C\abs\mu\cdot\norm{\nabla_v\p^{\abs\mu-1}f}_{L^2_{\gamma}}^2.
\end{equation}

\smallskip
{\bf Step 3. Upper bound for the terms $(III)$ and $(IV)$ and the
conclusion.}
\smallskip

Recall $(III)=2\sum_{1\leq i,j\leq 3}\sum_{\abs\beta=2}^{\abs\mu}
  C_{\mu }^\beta\int_{\Real^3}\inner{\p^\beta\bar a_{ij}}\inner{\p_{v_iv_j}\p^{\mu -\beta} f}
  \cdot\inner{\p^\mu f }\,dv$, and
\[
  (IV)=-2\sum\limits_{0\leq\abs\beta\leq\abs\mu }
  C_{\mu }^\beta\int_{\Real^3}\inner{\p^{\mu -\beta}\bar c}\inner{
  \p^\beta f}\cdot\inner{\p^\mu  f }dv.
\]
By virtue of Lemma \ref{a} and lemma \ref{c}, it follows that
\begin{eqnarray}\label{III}
  (III)\leq C \sum_{i,j=1}^3\sum_{\abs\beta=2}^{\abs\mu}
  C_{\mu }^\beta \norm{\p_{v_iv_j}\p^{\mu -\beta}f(t)}_{L^2_\gamma}
  \cdot\norm{\p^\mu f(t)}_{L^2_\gamma}\com{G(f(t))}_{\beta-2}\nonumber\\
  \leq C \sum_{\abs\beta=2}^{\abs\mu}
  C_{\mu }^\beta \norm{\nabla_v\p^{\mu -\beta+1} f(t)}_{L^2_\gamma}
  \cdot\norm{\nabla_v\p^{\mu-1}
  f(t)}_{L^2_\gamma}\com{G(f(t))}_{\beta-2},
\end{eqnarray}
and
\begin{eqnarray}\label{IV}
  (IV)\leq C\sum\limits_{0\leq\abs\beta\leq\abs\mu}
  C_{\mu }^\beta\norm{
  \p^\beta f(t)}_{L^2_\gamma}\cdot
  \norm{\nabla_v\p^{\mu-1}  f(t) }_{L^2_\gamma}\cdot\com{G(f(t))}_{{\mu}-\beta}.
\end{eqnarray}
Combining the estimates \reff{I}-\reff{IV}, we can deduce the
desired estimate in Lemma \ref{crucial}, that is
\begin{eqnarray*}
  &&\frac{d}{dt}\norm{\p^\mu f(t)}_{L^2}^2+C_1\norm{\nabla_v\p^\mu  f(t) }_{L^2_\gamma}^2
  \leq C_2\abs\mu^2\norm{\nabla_v\p^{\mu-1} f(t)}_{L^2_\gamma}^2+{}\\
  &{}&~
~+C_2 \sum\limits_{2\leq\abs\beta\leq\abs\mu}
  C_{\mu }^\beta \norm{\nabla_v\p^{\mu -\beta+1} f(t)}_{L^2_\gamma}
  \norm{\nabla_v\p^{\mu-1}
  f(t)}_{L^2_\gamma}\cdot\com{G(f(t))}_{\beta-2}+{}\\
  &{}&~~+C_2\sum\limits_{0\leq\abs\beta\leq\abs\mu}
   C_{\mu }^\beta\norm{
  \p^\beta f(t)}_{L^2_\gamma}\cdot\norm{\nabla_v\p^{\mu-1}  f(t)
  }_{L^2_\gamma}\cdot\com{G(f(t))}_{{\mu}-\beta},
\end{eqnarray*}
where $C_1, C_2$ are two constants depending only on $M_0, E_0, H_0$
and $\gamma.$  This completes the proof of Lemma \ref{crucial}.

\end{proof}

\end{document}